\DeclareFontFamily{U}{euf}{}
\DeclareFontShape{U}{euf}{m}{n}{%
  <5><6><7><8><9>gen*eufm%
  <10><10.95><12><14.4><17.28><20.74><24.88>eufm10%
  }{}
\DeclareFontShape{U}{euf}{b}{n}{%
  <5><6><7><8><9>gen*eufb%
  <10><10.95><12><14.4><17.28><20.74><24.88>eufb10%
  }{}
\DeclareFontFamily{U}{msb}{}
\DeclareFontShape{U}{msb}{m}{n}{%
  <5><6><7><8><9>gen*msbm%
  <10><10.95><12><14.4><17.28><20.74><24.88>msbm10%
  }{}
\DeclareFontFamily{U}{msa}{}
\DeclareFontShape{U}{msa}{m}{n}{%
  <5><6><7><8><9>gen*msam%
  <10><10.95><12><14.4><17.28><20.74><24.88>msam10%
  }{}
\newtheorem{theorem}{Theorem}[section]
\newtheorem{lemma}[theorem]{Lemma}
\newtheorem{proposition}[theorem]{Proposition}
\newtheorem{corollary}[theorem]{Corollary}
\theoremstyle{definition}
\newtheorem{definition}[theorem]{Definition}
\theoremstyle{remark}
\newtheorem{remark}[theorem]{Remark}
\numberwithin{equation}{section} \frenchspacing
\def\C{\mathbb C_p}
\def\BZ{\mathbb Z}
\def\Z{\mathbb Z_p}
\def\Q{\mathbb Q_p}
\def\C{\mathbb C_p}
\def\BZ{\mathbb Z}
\def\Z{\mathbb Z_p}
\def\Q{\mathbb Q_p}
\def\psum{\sideset{}{^{(p)}}\sum}
\begin{document}

\title[On $p$-adic Hurwitz-type Euler zeta functions]
{On $p$-adic Hurwitz-type Euler zeta functions}

\author{Min-Soo Kim and Su Hu}


\address{Department of Mathematical Sciences, Korea Advanced Institute of Science and Technology (KAIST), 373-1 Guseong-dong, Yuseong-gu, Daejeon 305-701, South Korea}
\email{minsookim@kaist.ac.kr}


\address{Department of Mathematical Sciences, Korea Advanced Institute of Science and Technology (KAIST), 373-1 Guseong-dong, Yuseong-gu, Daejeon 305-701, South Korea}
\email{hus04@mails.tsinghua.edu.cn, husu@kaist.ac.kr}



\subjclass[2000]{11E95, 11B68, 11S80, 11M35} \keywords{Euler number
and polynomial, $p$-adic integral, $p$-adic Hurwitz-type Euler zeta
function.}

\begin{abstract}
The definition for the $p$-adic Hurwitz-type Euler zeta functions
has been given by using the fermionic $p$-adic integral on $\mathbb
Z_p$. By computing  the values of this kind of $p$-adic zeta
function at negative integers, we show that it interpolates the
Euler polynomials $p$-adically. Many properties are provided for the
$p$-adic Hurwitz-type Euler zeta functions, including the convergent
Laurent series expansion, the distribution formula, the functional
equation, the reflection formula, the derivative formula, the
$p$-adic Raabe formula and so on. The definition for the $p$-adic
Euler $L$-functions has also been given by using the $p$-adic
Hurwitz-type Euler zeta functions.
\end{abstract}

\maketitle

\def\C{\mathbb C_p}
\def\BZ{\mathbb Z}
\def\Z{\mathbb Z_p}
\def\Q{\mathbb Q_p}


\section{Introduction}

Throughout this paper, we use the following notations.
\begin{equation*}
\begin{aligned}
\qquad \mathbb{C}  ~~~&- ~\textrm{the field of complex numbers}.\\
\qquad p  ~~~&- ~\textrm{an odd rational prime number}. \\
\qquad\mathbb{Z}_p  ~~~&- ~\textrm{the ring of $p$-adic integers}. \\
\qquad\mathbb{Q}_p~~~&- ~\textrm{the field of fractions of}~\mathbb Z_p.\\
\qquad\mathbb C_p ~~~&- ~\textrm{the completion of a fixed algebraic closure}~\overline{\mathbb Q}_p~ \textrm{of}~\mathbb Q.\\
\qquad C\mathbb{Z}_{p}  ~~~&- ~\mathbb{Q}_{p}\backslash\mathbb{Z}_{p}. \\
\qquad v_p ~~~&- ~\textrm{the $p$-adic valuation of}~\mathbb C_p~\textrm{normalized so that}~ |p|_p=p^{-v_p(p)}=p^{-1}.\\
\qquad \Z^\times~~~&- ~\textrm{the group of $p$-adic units}.
\end{aligned}
\end{equation*}

We say that $f:\mathbb Z_p\rightarrow\mathbb C_p$ is uniformly
differentiable function at a point $a\in\mathbb Z_p,$ and we write
$f\in UD(\mathbb Z_p),$ if the difference quotients $\Phi_f:\mathbb
Z_p\times\mathbb Z_p\rightarrow\mathbb C_p$ such that
\begin{equation}\label{UD}
\Phi_f(x,y)=\frac{f(x)-f(y)}{x-y}
\end{equation}
have a limit $f'(a)$ as $(x,y)\rightarrow(a,a),x\neq y$ (see
\cite[p.\,221]{Ro}).

Cohen and Friedman \cite{CF} constructed the $p$-adic
analogue for Hurwitz zeta functions, and Raabe-type formulas for the
$p$-adic gamma and zeta functions from Volkenborn integrals
satisfying the modified difference equation. Here the Volkenborn
integral of a function $f:\Z\to\C$ with $f\in UD(\Z)$ is defined by
\begin{equation}\label{vol-int}
\int_{\mathbb
Z_p}f(x)dx=\lim_{N\to\infty}\frac1{p^N}\sum_{x=0}^{p^N-1}f(x)
\end{equation}
(cf. \cite[p.\,264]{Ro}). This integral was introduced by Volkenborn
\cite{Vo} and he also investigated many important properties of
$p$-adic valued functions defined on the $p$-adic domain (see
\cite{Vo,Vo1}). Recently, Tangedal and Young \cite{TP} defined
$p$-adic multiple zeta  and log gamma functions by using multiple
Volkenborn integrals, and developed many of their properties.

As shown by Cohen, the $p$-adic functions with nice properties are
powerful tools for studying many results of classical number theory
in a straightforward manner, for instance strengthenings of almost
all the arithmetic results on Bernoulli and Euler numbers, see Cohen
\cite[Chapter 11]{Co2} or the accounts in Iwasawa \cite{Iw}, Koblitz
\cite{Ko}, Lang \cite{La}, Murty \cite{Ra}, Washington \cite{Wa}.

The fermionic $p$-adic integral $I_{-1}(f)$ on $\mathbb Z_p$ is
defined by
\begin{equation}\label{-q-e}
I_{-1}(f)=\int_{\mathbb Z_p}f(a)d\mu_{-1}(a)
=\lim_{N\rightarrow\infty}\sum_{a=0}^{p^N-1}f(a)(-1)^a.
\end{equation}
where $f\in UD(\Z).$ This integral was  introduced by Kim \cite{TK}
in order to derives useful formulas involving the Euler numbers and
polynomials. It has also been defined independently by Shiratani and
Yamamoto \cite{Shi} in order to interpolate the Euler numbers
$p$-adically. Osipov \cite{Osipov} gave a new proof of the
existence of the Kubota-Leopoldt $p$-adic zeta function by using the
integral representation
\begin{equation}\label{-q-e-ep}
I_{\varepsilon}(f)=\int_{\mathbb Z_p}f(a)d\mu_{\varepsilon}(a)
=\lim_{N\rightarrow\infty}\sum_{a=0}^{p^{lN}-1}f(a)\varepsilon^a.
\end{equation}
where $f\in UD(\Z),\varepsilon^k=1,\varepsilon\neq1,(k,p)=1,$ and
$k\mid(p^l-1).$ Note that when $k=2,$ $I_{\varepsilon}(f)$ is the
fermionic $p$-adic integral $I_{-1}(f)$ on $\mathbb Z_p.$ Recently,
the fermionic $p$-adic integral $I_{-1}(f)$ on $\mathbb Z_p$   is
used by  the first author to give a brief proof of Stein's classical
result on Euler numbers modulo power of two \cite{MSK}, it has also
been used by Ma\"iga \cite{Ma} to give some new identities and
congruences concerning Euler numbers and polynomials.

In 1749, Euler gave a paper to the Berlin Academy entitled
\textit{Remarques sur un beau rapport entre les s\'eries des
puissances tant directes que r\'eciproques.} In this paper, he
studied
$$\phi(s)=\sum_{n=1}^{\infty}\frac{(-1)^{n}}{n^{s}}$$
(see \cite[p. 1081]{Ayoub}).

For $s\in\mathbb{C}$ and Re$(s) > 0$, the Euler zeta function and
the Hurwitz-type Euler zeta function are defined by
$$\zeta_{E}(s)=2\sum_{n=1}^{\infty}\frac{(-1)^{n}}{n^{s}},\quad\textrm{and}\quad
\zeta_{E}(s,x)=2\sum_{n=0}^{\infty}\frac{(-1)^{n}}{(n+x)^{s}}$$
respectively (see \cite{Ayoub,KKW,TK,TK08,Murty}). Notice that the
Euler zeta functions can be analytically continued to  the whole
complex plane, and these zeta functions have the values of the Euler
numbers or the Euler polynomials at negative integers.

In this paper, by using the fermionic $p$-adic integral $I_{-1}(f)$
on $\mathbb Z_p$ we define the $p$-adic Hurwitz-type Euler zeta
functions following Cohen's approach in \cite[Chapter 11]{Co2} and
Tangedal-Young's approach in \cite{TP}.

Our paper is organized as follows.

In section 2, we recall the relationship between the ferminoic
$p$-adic integral $I_{-1}(f)$ on $\mathbb{Z}_{p}$ and the Euler
numbers and polynomials.
And we also prove those fundamental formulas for the ferminoic $p$-adic
integral $I_{-1}(f)$ on $\Z.$ These three formulas will be used to get
the main results of this paper.

In section 3, we give the definition for the $p$-adic Hurwitz-type
Euler zeta function for $x\in
\mathbb{C}_{p}\backslash\mathbb{Z}_{p}$. By computing the values of
this kind of $p$-adic zeta function at negative integers, we show
that it interpolates the Euler polynomials $p$-adically (see
Theorem \ref{p-E-zeta-val} below). We also obtain many properties
for $p$-adic Hurwitz-type Euler zeta function for  $x\in
\mathbb{C}_{p}\backslash\mathbb{Z}_{p}$, including the convergent
Laurent series expansion, the distribution formula, the functional
equation, the derivative formula, the $p$-adic Raabe formula and so
on.

In section 4, we give the definition for  the $p$-adic Hurwitz-type
Euler zeta function for $x\in \mathbb{Z}_{p}$ using characters
modulo $p^{v}$. We also obtain many  properties for the $p$-adic
Hurwitz-type Euler zeta function  for $x\in \mathbb{Z}_{p}$,
including the distribution formula, the functional equation, the
reflection formula, the derivative formula, the $p$-adic Raabe
formula and so on. By using  the $p$-adic Hurwitz-type Euler zeta
function, we also gave a new definition for the $p$-adic Euler
$L$-function for characters modulo $p^{v}$. We show that in this
case the definition is equivalent to the first author's previous
definition in \cite{MSK2}. In \cite{MSK2}, the first author
proposed a construction of $p$-adic Euler $L$-function for Dirichlet
characters with odd conductors using Kubota-Leopoldt's approach. We
also show that the $p$-adic Hurwitz-type Euler zeta function can be
represented as the $p$-adic Euler $L$-functions using the power
series expansion under certain conditions.

In section 5, we define the $p$-adic Euler $L$-functions for any
Dirichlet characters by using the $p$-adic Hurwitz-type Euler zeta
functions. By computing the values of the $p$-adic Euler
$L$-functions at negative integers, we show that for Dirichlet
characters with odd conductor, this definition is equivalent to the
first author's previous definition in \cite{MSK2}. We also study the
behavior of $p$-adic Euler $L$-functions at positive integers. We
show that most of the results in Section 11.3.3 of Cohen's
book \cite{Co2} are also established if we replace the generalized
Bernoulli numbers with the generalized Euler numbers.

Since Euler numbers has the connections with class numbers of
imaginary quadratic fields due to Carlitz \cite{Ca} and class
numbers of general cyclic quartic fields due to Xianke
Zhang \cite{xzhang}, we believe some results in this paper may have
potential applications in algebraic number theory.

\section{Euler numbers and polynomials in $p$-adic analysis}

Let $X\subset\C$ be an arbitrary subset closed under $x\rightarrow
x+a$ for $a\in\Z$ and $x\in X.$ In particular, $X$ could be
$\C\setminus\Z, \Q\setminus\Z$ or $\Z.$ Suppose $f:X\rightarrow\C$
is uniformly differentiable on $X,$ so that for fixed $x\in X$ the
function $a\rightarrow f(x+a)$ is uniformly differentiable on $\Z.$
Let $\Delta$ be the difference operator defined by $(\Delta
f)(a)=f(a+1)-f(a)$ and put $(\nabla f)(a)=f(a)-f(a-1).$ The
following three properties of the fermionic $p$-adic integral
$I_{-1}(f)$ on $\Z$ can be directly derived from the definition:
\begin{equation}\label{de-1-int-ex-1}
\int_{\mathbb Z_p}f(x+a)d\mu_{-1}(a)=2f(x-1)-\int_{\mathbb
Z_p}(f_1)^{-1}(x+a)d\mu_{-1}(a) ;
\end{equation}
\begin{equation}\label{de-1-int-ex-2}
\int_{\mathbb Z_p}f(x+a)d\mu_{-1}(a)=f(x)-\frac{1}{2}\int_{\mathbb
Z_p}(\Delta f)(x+a)d\mu_{-1}(a) ;
\end{equation}
\begin{equation}\label{de-1-int-ex-3}
\int_{\mathbb Z_p}f(x+a)d\mu_{-1}(a)=f(x-1)+\frac{1}{2}\int_{\mathbb
Z_p}(\nabla f)(x+a)d\mu_{-1}(a).
\end{equation}

In the $p$-adic theory (\ref{de-1-int-ex-1})-(\ref{de-1-int-ex-3})
are the characterization integral equations of the Euler numbers and
polynomials: If we put $f(a)=e^{at}$ in
(\ref{de-1-int-ex-1})-(\ref{de-1-int-ex-3}), we get
\begin{equation}\label{def-e-poly-int}
\int_{\Z} e^{at}d\mu_{-1}(a)+\int_{\Z}
e^{(a-1)t}d\mu_{-1}(a)=2e^{-t},
\end{equation}
whence we may immediately deduce the following
\begin{equation}\label{def-e-poly}
e^{xt}\int_{\Z} e^{at}d\mu_{-1}(a)=\sum_{m=0}^\infty
E_n(x)\frac{t^n}{n!},
\end{equation}
where $t\in\C$ such that $|t|_p<p^{-1/(p-1)}$ and $E_n(x)$ are the
Euler polynomials. Therefore by (\ref{def-e-poly}), we get
\begin{equation}\label{int-Em}
\int_{\Z} (x+a)^nd\mu_{-1}(a)=E_n(x).
\end{equation}
If we put $f(a)=e^{(2a+1)t}$ with $t\in\C$ such that
$|t|_p<p^{-1/(p-1)}$ in (\ref{de-1-int-ex-1})-(\ref{de-1-int-ex-3}),
we get
$$\int_{\Z} e^{(2(a+1)+1)t}d\mu_{-1}(a)+\int_{\Z} e^{(2a+1)t}d\mu_{-1}(a)=2e^t,$$
whence we may immediately deduce the following
\begin{equation}\label{def-e-numbers-eq}
\int_{\Z} e^{(2a+1)t}d\mu_{-1}(a)=\sum_{m=0}^\infty
E_n\frac{t^n}{n!},
\end{equation}
where $E_m$ are the Euler numbers. By (\ref{int-Em}) and
(\ref{def-e-numbers-eq}), we have the identity
$$E_n=\int_{\Z} (2a+1)^nd\mu_{-1}(a)$$
(see \cite[Theorem 2.5]{MSK}). Using (\ref{int-Em}), this can also
be written
\begin{equation}\label{Eu-nu-pol}\begin{aligned}
E_n=2^m\int_{\Z}
\left(a+\frac12\right)^nd\mu_{-1}(a)=2^n\sum_{k=0}^n\binom
nk\left(\frac12\right)^{n-k}E_k(0)
\end{aligned}\end{equation}
and
\begin{equation}\label{Eu-pol-nu}\begin{aligned}
E_n(0)&=2^{-n}\int_{\Z}
(2a+1-1)^nd\mu_{-1}(a)\\&=2^{-n}\sum_{k=0}^n\binom nk(-1)^{n-k}E_k.
\end{aligned}\end{equation}
Taking $f(a)=e^{-(a+1)t}$ in
(\ref{de-1-int-ex-1})-(\ref{de-1-int-ex-3}), we find
\begin{equation}\label{int-eq-shi}
\int_{\Z} e^{-(a+2)t}d\mu_{-1}(a)+\int_{\Z}
e^{-(a+1)t}d\mu_{-1}(a)=2e^{-t}.
\end{equation}
It is easy to see from (\ref{def-e-poly-int}) that
(\ref{int-eq-shi}) satisfies
\begin{equation}\label{int-eq-shi-gen}
\int_{\Z}
e^{-(a+1)t}d\mu_{-1}(a)=\frac{2e^{-t}}{e^{-t}+1}=\frac{2}{e^{t}+1}=\int_{\Z}
e^{at}d\mu_{-1}(a).
\end{equation}
Combining (\ref{def-e-poly}) and (\ref{int-eq-shi-gen}), we
calculate
$$\begin{aligned}
\sum_{n=0}^\infty\int_{\Z} (x+a)^nd\mu_{-1}(a)\frac{(-t)^n}{n!}&=e^{(1-x)t}\int_{\Z} e^{-(a+1)t}d\mu_{-1}(a) \\
&=e^{(1-x)t}\int_{\Z} e^{at}d\mu_{-1}(a) \\
&=\sum_{n=0}^\infty \int_{\Z} (1-x+a)^nd\mu_{-1}(a)\frac{t^n}{n!},
\end{aligned}$$
so that
\begin{equation}\label{e-poly-symm}
(-1)^n\int_{\Z} (x+a)^nd\mu_{-1}(a)=\int_{\Z} (1-x+a)^nd\mu_{-1}(a)
\end{equation}
or equivalently
\begin{equation}\label{e-poly-symm-eq}
E_n(x)=(-1)^nE_n(1-x),
\end{equation}
which amounts to $E_n(1/2)=0$ for odd $n.$ From (\ref{int-Em}),
(\ref{Eu-nu-pol}) and (\ref{e-poly-symm-eq}), we see that
$E_{2n+1}=2^{2n+1}E_{2n+1}(1/2)=0$ for $n\geq0.$ Therefore we have
the results as follows:

\begin{proposition}\label{E-np-pro-11}
For $n\geq 0$
\begin{itemize}
\item[\rm(1)] $I_{-1}((x+a)^n)=\int_{\Z} (x+a)^n d\mu_{-1}(a)=E_n(x).$
\item[\rm(2)] $I_{-1}((2a+1)^n)=\int_{\Z} (2a+1)^n d\mu_{-1}(a)=E_n.$
\item[\rm(3)] $E_n=2^n\sum_{k=0}^n\binom nk\left(\frac12\right)^{n-k}E_k(0).$
\item[\rm(4)] $E_n(0)=2^{-n}\sum_{k=0}^n\binom nk(-1)^{n-k}E_k.$
\item[\rm(5)] $E_n(x)=(-1)^nE_n(1-x).$
\item[\rm(6)] $E_{2n+1}=0.$
\end{itemize}
\end{proposition}

\begin{theorem}\label{E-np-thm}
For any $f\in UD(\Z),$ the fermionic $p$-adic integral on $\mathbb Z_p$ satisfies the following
three properties:

\begin{itemize}
\item[\rm(1)] $\int_{\Z}f(a+1)d\mu_{-1}(a)+\int_{\Z}f(a)d\mu_{-1}(a)=2f(0).$
\item[\rm(2)] $\int_{\Z}f(a+1)d\mu_{-1}(a)=\int_{\Z}f(-a)d\mu_{-1}(a).$
\item[\rm(3)] $\int_{\Z}f(a)d\mu_{-1}(a)=\sum_{j=0}^{m-1}(-1)^j\int_{\Z}f(j+ma)d\mu_{-1}(a),$
where $m$ is an odd integer.
\end{itemize}
\end{theorem}
\begin{proof}
(1) From (\ref{-q-e}), we have
$$
\begin{aligned}
\int_{\Z}f(a+1)d\mu_{-1}(a)&=\lim_{N\to\infty}\sum_{a=0}^{p^N-1}f(a+1)(-1)^a \\
&=-\lim_{N\to\infty}\sum_{a=1}^{p^N}f(a)(-1)^a \\
&=-\lim_{N\to\infty}\left(\sum_{a=0}^{p^N-1}f(a)(-1)^a-f(0)-f(p^N) \right)\\
&=-\int_{\Z}f(a)d\mu_{-1}(a)+2f(0),
\end{aligned}
$$
which proves (1).

(2) Consider
$$
\begin{aligned}
&\quad\sum_{a=0}^{p^N-1}f(a+1)(-1)^a -\sum_{a=0}^{p^N-1}f(-a)(-1)^a  \\
&=f(1)-f(2)+\cdots+f(p^N) \\
&\quad-(f(0)-f(-1)+\cdots+f(1-p^N)) \\
&=(f(p^N)-f(0))+(-f(p^N-1)+f(-1)) \\
&\quad+(f(p^N-2)-f(-2))+\cdots+(f(1)-f(1-p^N)).
\end{aligned}
$$
Then we have
\begin{equation}\label{pf2.2-1}
\begin{aligned}
&\quad\biggl|\sum_{a=0}^{p^N-1}f(a+1)(-1)^a -\sum_{a=0}^{p^N-1}f(-a)(-1)^a\biggl|_p  \\
&\leq \max_{0\leq j\leq p^N-1}|f(p^N-j)-f(-j)|_p.
\end{aligned}
\end{equation}
Since $f$ is continuous, for any $\varepsilon>0,$ there exists a positive integer $N_0,$ such that
for any $N\geq N_0$ and any $x\in\mathbb Z_p,$ we obtain
\begin{equation}\label{pf2.2-2}
|f(p^N-x)-f(-x)|_p\leq\varepsilon.
\end{equation}
From (\ref{pf2.2-1}) and (\ref{pf2.2-2}), for any $N\geq N_0,$ we have
$$\biggl|\sum_{a=0}^{p^N-1}f(a+1)(-1)^a -\sum_{a=0}^{p^N-1}f(-a)(-1)^a\biggl|_p\leq\varepsilon,$$
thus
$$\lim_{N\to\infty}\sum_{a=0}^{p^N-1}f(a+1)(-1)^a=\lim_{N\to\infty}\sum_{a=0}^{p^N-1}f(-a)(-1)^a,$$
giving (2).

(3) Note that if $m$ is odd, then we have $(-1)^{j+ma}=(-1)^{j+a}$ and
$$\begin{aligned}
\sum_{j=0}^{m-1}(-1)^j\sum_{a=0}^{p^N-1}f(j+ma)(-1)^a
&=\sum_{j=0}^{m-1}\sum_{a=0}^{p^N-1}f(j+ma)(-1)^{j+ma} \\
&=\sum_{a'=0}^{mp^N-1}f(a')(-1)^{a'}.
\end{aligned}$$
Letting $N\to\infty$ in the above equality, we get our result.
\end{proof}

\section{The $p$-adic Hurwitz-type Euler zeta functions}\label{Results}

Before passing to the $p$-adic theory, we take a quick look at the
complex analytic Hurwitz-type Euler zeta function $\zeta_{E}(s,x).$

\begin{definition}
We define the Hurwitz-type Euler zeta function $\zeta_E(s,x)$ for
$x\in\mathbb R_{>0}$ and $s\in\mathbb C$ with Re$(s)>0$ by
$$\zeta_{E}(s,x)=2\sum_{n=0}^\infty\frac{(-1)^n}{(n+x)^s}.$$
It is known that $\zeta_{E}(s,x)$ can be extended to the whole
$s$-plane by means of contour integral (see \cite{TK08}).
\end{definition}

For $x\in\mathbb R_{>0}$ and $m\geq0,$ the formula
\begin{equation}\label{E-zeta-ne}
\zeta_{E}(-m,x)=E_m(x)
\end{equation}
holds (cf. \cite{TK08}). This identity (\ref{E-zeta-ne}) obtained
earlier in the course of Euler's proof for the values at
non-positive integer arguments of the Riemann zeta function
$\zeta(s)=\sum_{n=1}^\infty1/n^s$ (cf. \cite{Ayoub,KKW}). From
(\ref{int-Em}) and (\ref{E-zeta-ne}), we can write
\begin{equation}\label{E-zeta-int}
\int_{\Z}(x+a)^md\mu_{-1}(a)=E_m(x)=\zeta_{E}(-m,x).
\end{equation}
This is used to construct a $p$-adic Hurwitz-type Euler zeta
function.

Given $x\in\Z,p\nmid x$ and $p>2,$ there exists a unique $(p-1)$th
root of unity $\omega(x)\in\mathbb Z_p$ such that
$$x\equiv\omega(x)\pmod{p},$$
where $\omega$ is the Teichm\"uller character. Let $\langle
x\rangle=\omega^{-1}(x)x,$ so $\langle x\rangle \equiv1\pmod p.$

Next we  define the projection  $\langle x\rangle$ for all $x\in
\mathbb{C}_p^{\times}$, as was done by Kashio \cite{Kashio} and by
Tangedal-Young in \cite{TP}.

Fixing an embedding of $\overline{\mathbb{Q}}$ into $\mathbb{C}_{p}$.
$p^{\mathbb{Q}}$ denotes the image in $\mathbb{C}_{p}^{\times}$  of
the set of positive real rational powers of $p$ under this
embedding, $\mu$ denotes the group of roots of unity in
$\mathbb{C}_{p}^{\times}$ of order not divisible by $p$. For
$x\in\mathbb{C}_{p}$, $|x|_{p}=1$, there exists a unique elements
$\hat{x}\in\mu$ such that $|x-\hat{x}|_{p} < 1$ (called the
Teichm\"uller representative of $x$); it may be defined by
$\hat{x}=\lim_{n\to\infty}x^{p^{n!}}$. We extend this definition to
$x\in\mathbb{C}_{p}^{\times}$ by
\begin{equation} \hat{x}:=(\widehat{x/p^{v_{p}}(x)}),\end{equation}
that is, we define $\hat{x}=\hat{u}$ if $x=p^{r}u$ with $p^{r}\in
p^{\mathbb{Q}}$ and $|u|_{p}=1$, then we define the function
$\langle\cdot\rangle$ on $\mathbb{C}_{p}^{\times}$ by $$\langle
x\rangle=p^{-v_{p}(x)}x/\hat{x},$$ and we also define
$\omega_v(\cdot)$ on $\mathbb{C}_{p}^{\times}$ by
$$\omega_v(x)=\frac{x}{\langle x\rangle}.$$  From this we get an
internal product decomposition of multiplicative groups
\begin{equation}
\mathbb{C}_{p}^{\times}\simeq p^{\mathbb{Q}}\times\mu\times D
\end{equation}
where $D=\{x\in\mathbb{C}_{p}: |x-1|_{p} < 1\},$ given by
\begin{equation}
x=p^{v_{p}(x)}\cdot\hat{x}\cdot\langle x\rangle\mapsto
(p^{v_{p}(x)},\hat{x},\langle x\rangle).\end{equation} As remarked
by Tangedal and Young in \cite{TP}, this decomposition of
$\mathbb{C}_{p}^{\times}$ depends on the choice of embedding of
$\overline{\mathbb{Q}}$ into $\mathbb{C}_{p}$; the projections
$p^{v_{p}(x)},\hat{x},\langle x\rangle$ are uniquely determined up
to roots of unity. However for $x\in\mathbb{Q}_{p}^{\times}$ the
projections $p^{v_{p}(x)},\hat{x},\langle x\rangle$ are uniquely
determined and do not depend on the choice of the embedding. Notice
that the projections $x\mapsto p^{v_{p}(x)}$ and $x\mapsto \hat{x}$
are constant on discs of the form $\{x\in\mathbb{C}_{p}:|x-y|_{p} <
|y|_{p}\}$ and therefore have derivative zero whereas the
projections $x\mapsto\langle x\rangle$ has derivative
$\frac{d}{dx}\langle x\rangle=\langle x\rangle/x$.

For $x\in\mathbb{C}_{p}^{\times}$ and $s\in\mathbb{C}_{p}$ we define
$\langle x\rangle^{s}$ (\cite[p. 141]{SC}) by
\begin{equation} \langle
x\rangle^{s}=\sum_{n=0}^{\infty}\binom{s}{n}(\langle
x\rangle-1)^{n}\end{equation} when this sum is convergence.

\begin{proposition}[see Tangedal and Young \cite{TP}]\label{analytic} For any
 $x\in\mathbb{C}_{p}^{\times}$ the function $s\mapsto\langle
 x\rangle^{s}$ is a $C^{\infty}$ function of $s$ on
 $\mathbb{Z}_{p}$ and is analytic on a disc of positive radius about
 $s=0$; on this disc it is locally analytic as a function of $x$ and
 independent of the choice made to define the $\langle
 \cdot\rangle$ function. If $x$ lies in a finite extension $K$ of
 $\mathbb{Q}_{p}$ whose ramification index over $\mathbb{Q}_{p}$ is
 less than $p-1$ then  $s\mapsto\langle
 x\rangle^{s}$  is analytic for $|s|_{p} <
 |\pi|_{p}^{-1}p^{-1/(p-1)}$, where $(\pi)$ is the maximal ideal of
 the ring of integers $O_{K}$ of $K$. If $s\in\mathbb{Z}_{p}$, the
 function $s\mapsto\langle
 x\rangle^{s}$ is an analytic function of $x$  on any disc of the form $\{x\in\mathbb{C}_{p}:|x-y|_{p} <
|y|_{p}\}$.
\end{proposition}

\begin{definition}\label{p-E-zeta}
For $x\in \mathbb{C}_{p}\backslash \mathbb{Z}_{p}$, we define the
$p$-adic Hurwitz-type Euler zeta function $\zeta_{p,E}(s,x)$ by the
equivalent formulas
$$\zeta_{p,E}(s,x)=\int_{\Z}\langle x+a\rangle^{1-s}d\mu_{-1}(a).$$
\end{definition}

\begin{theorem}
For any  choice of $x\in\mathbb{C}_{p}\backslash \mathbb{Z}_{p}$ the
function $\zeta_{p,E}(s,x)$ is a $C^{\infty}$ function of $s$ on
$\mathbb{Z}_{p}$, and is an analytic function of $s$ on a disc of
positive radius about $s=0$; on this disc it is locally analytic as
a function of $x$ and independent of the choice made to define the
$\langle\cdot\rangle$ function. If $x$ is so chosen to lie in a
finite extension $K$ of $\mathbb{Q}_{p}$ whose ramification index
over $\mathbb{Q}_{p}$ is less than $p-1$ then $\zeta_{p,E}(s,x)$ is
analytic for $|s|_{p} <
 |\pi|_{p}^{-1}p^{-1/(p-1)}$. If $s\in\mathbb{Z}_{p}$, the function
 $\zeta_{p,E}(s,x)$ is locally analytic as a function of $x$ on
 $\mathbb{C}_{p}\backslash\mathbb{Z}_{p}$.
\end{theorem}
\begin{remark}Tangedal and Young proved the similar results for
$p$-adic multiple zeta functions (see \cite[Theorem 3.1]{TP}).
\end{remark}
\begin{proof} This follows from Proposition \ref{analytic} and
definition of $\zeta_{p,E}(s,x)$.\end{proof}

\begin{theorem}\label{p-analytic}
Suppose $x\in \mathbb{C}_{p}$ and $|x|_{p} > 1 $. Then there
is an identity of analytic functions
\begin{equation*}\begin{aligned}
\zeta_{p,E}(s,x)&=\int_{\Z}\langle x+a\rangle^{1-s}d\mu_{-1}(a)
\\&=\langle x\rangle^{1-s}\sum_{i=0}^\infty\binom{1-s}iE_i(0)\frac{1}{x^i}
\end{aligned}\end{equation*}
on a disc of positive radius about $s=0.$
If in addition  $x$ is so chosen to lie in a finite extension $K$ of
$\mathbb{Q}_{p}$ whose ramification index over $\mathbb{Q}_{p}$ is
less than $p-1$, then  this formula is valid for $s$ in
$\mathbb{C}_{p}$ such that $|s|_{p} <
 |\pi|_{p}^{-1}p^{-1/(p-1)}$, where $(\pi)$ is the maximal ideal of
 the ring of integers $O_{K}$ of $K$.
\end{theorem}

\begin{remark}Tangedal and Young proved the similar results for
$p$-adic multiple zeta functions (see \cite[Theorem 4.1]{TP}). We
follow their methods to prove this theorem.
\end{remark}
\begin{proof}
Under the stated hypotheses, for all $a\in\mathbb{Z}_{p}$, we use
Proposition \ref{analytic} to write
\begin{equation}\label{analyticnew}
\langle x+a\rangle^{1-s}=\langle
x\rangle^{1-s}\sum_{i=0}^\infty\binom{1-s}{i}a^i\frac{1}{x^i}
\end{equation}
as an identity of analytic functions. From
Proposition \ref{E-np-pro-11} (1) we have
\begin{equation*}\int_{\Z} a^{n}d\mu_{-1}(a)=E_n(0).\end{equation*}
Then we integrate (\ref{analyticnew}) with respect to $a$ and obtain
\begin{equation*}\begin{aligned}\zeta_{p,E}(s,x)&=\int_{\Z}\langle x+a\rangle^{1-s}d\mu_{-1}(a)
\\&=\langle x\rangle^{1-s}\sum_{i=0}^\infty\binom{1-s}iE_i(0)\frac{1}{x^i}.
\end{aligned}\end{equation*}
We obtain the final statement.
\end{proof}

\begin{theorem}\label{p-E-zeta-val}
Suppose that $x\in \C$ and $|x|_p>1.$
\begin{enumerate}
\item[\rm(1)] For $m\in\mathbb Z\setminus\{0\},$
$$\zeta_{p,E}(1+m,x)=\omega_v^m(x)\int_{\Z} \frac{1}{(x+a)^m}d\mu_{-1}(a).$$
\item[\rm(2)] For $m\in\mathbb N,$
$$\zeta_{p,E}(1-m,x)=\frac1{\omega_v^{m}(x)}E_m(x)=\frac1{\omega_v^{m}(x)}\zeta_E(-m,x).$$
\end{enumerate}
\end{theorem}
\begin{proof}
(1) Note that
\begin{equation}\label{3.8-1}
\zeta_{p,E}(1+m,x)=\int_{\Z}\langle x+a\rangle^{-m}d\mu_{-1}(a).
\end{equation}
Since $x+a=\omega_v(x+a)\langle x+a\rangle$ and $|x|_p>1,|a|_p\leq1,$
we have $\omega_v(x+a)=\omega_v(x)$ and
\begin{equation}\label{3.8-2}
x+a=\omega_v(x)\langle x+a\rangle.
\end{equation}
From (\ref{3.8-1}) and (\ref{3.8-2}), we complete the proof.

(2) Note that
\begin{equation}\label{3.8-3}
\zeta_{p,E}(1-m,x)=\int_{\Z}\langle x+a\rangle^{m}d\mu_{-1}(a).
\end{equation}
From (\ref{3.8-2}) and (\ref{3.8-3}), we have
$$\zeta_{p,E}(1-m,x)=\omega_v(x)^{-m}\int_{\Z}(x+a)^{m}d\mu_{-1}(a).$$
Using Proposition \ref{E-np-pro-11} (1) and (\ref{E-zeta-ne}),
we get our result.
\end{proof}

\begin{theorem}\label{zeta-1}
Suppose that $x\in \C\backslash\Z.$ We have
$$\zeta_{p,E}(1,x)= 1.$$
\end{theorem}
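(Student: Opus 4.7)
The plan is to evaluate the convergent Laurent series expansion from Theorem \ref{p-analytic} at $s = 1$. Substituting $s = 1$ gives
$$\zeta_{p,E}(1,x) = \langle x \rangle^{0} \sum_{i=0}^\infty \binom{0}{i} E_i(0) \frac{1}{x^i}.$$
Because $\binom{0}{i} = 0$ for all $i \geq 1$ while $\binom{0}{0} = 1$, the entire series collapses to its $i = 0$ term. Combined with $E_0(0) = 1$ (the constant Euler polynomial is $E_0(x) \equiv 1$) and $\langle x \rangle^{0} = 1$, this immediately yields $\zeta_{p,E}(1,x) = 1$. The substitution is legitimate since $|1|_p = 1 < R_p = p^{(p-2)/(p-1)}$ for every odd prime $p$, so the point $s = 1$ lies inside the disk of convergence established in Theorem \ref{p-analytic}.

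As an independent cross-check, one can read the value directly off Definition \ref{p-E-zeta}. Setting $s = 1$ makes the integrand $\langle x + a \rangle^{1-s}$ identically $1$, so
$$\zeta_{p,E}(1,x) = \int_{\Z} 1 \, d\mu_{-1}(a) = \lim_{N\to\infty} \sum_{a=0}^{p^N - 1}(-1)^a = 1,$$
where the last equality uses that $p^N$ is odd, so the alternating sum over $\{0, 1, \dots, p^N - 1\}$ has exactly one more even index than odd index, namely $(p^N+1)/2 - (p^N-1)/2 = 1$. There is no genuine obstacle here; all the analytic work (the identification of the fermionic integral with the Laurent series and the justification that the series converges on $|s|_p < R_p$) was already completed in Theorem \ref{p-analytic}, so the theorem amounts to a clean substitution.
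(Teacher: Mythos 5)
Your proof is correct, and your ``cross-check'' is in fact exactly the paper's proof: the authors simply set $s=1$ in Definition \ref{p-E-zeta}, observe the integrand becomes $1$, and evaluate $\lim_{N\to\infty}\sum_{a=0}^{p^N-1}(-1)^a=1$ (valid since $p^N$ is odd). Your primary route through the Laurent expansion of Theorem \ref{p-analytic} is also valid --- $|1|_p=1<R_p$, $\binom{0}{i}=0$ for $i\geq 1$, and $E_0(0)=1$, so the series collapses as you say --- but it invokes the full analytic machinery of Theorem \ref{p-analytic} where a one-line computation from the definition suffices; the only thing the series argument buys is a consistency check on the expansion itself.
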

\begin{proof}
By Definition \ref{p-E-zeta}, we have
$$\zeta_{p,E}(1,x)=\int_{\Z}d\mu_{-1}(a)=\lim_{N\to\infty}\sum_{a=0}^{p^{N}-1}(-1)^{a}=1.$$
This completes the proof.
\end{proof}

\begin{theorem}\label{p-analytic-some-result23}
The function $\zeta_{p,E}(s,x)$ has the following properties.
\begin{enumerate}
\item[\rm(1)] For all $x\in \C\backslash\Z$ the function $\zeta_{p,E}(s,x)$ satisfies the functional equation
$$\zeta_{p,E}(s,x+1)+\zeta_{p,E}(s,x)=\frac{2x}{\omega_v(x)\langle x\rangle^{s}}.$$
\item[\rm(2)]
For all $x\in \mathbb{C}_{p}\backslash\mathbb{Z}_{p}$ we have the
reflection functional equation
$$\zeta_{p,E}(s,1-x)=\zeta_{p,E}(s,x).$$
\item[\rm(3)]
For all $x\in \mathbb{C}_{p}\backslash\mathbb{Z}_{p}$ and an odd positive integer $m$
we have the distribution formula
$$\zeta_{p,E}(s,mx)=\langle m\rangle^{1-s}\sum_{j=0}^{m-1}(-1)^j\zeta_{p,E}\left(s,x+\frac jm\right).$$
\item[\rm(4)]
For all $x\in\mathbb{C}_{p}$ and $|x|_{p} > 1$ we have derivative formula
$$\frac{\partial}{\partial x}\zeta_{p,E}(s,x)=\frac{1-s}{\omega_v(x)}\zeta_{p,E}(s+1,x)$$
holds if $s\in\mathbb{Z}_{p}$; this identity also holds for $|s|_{p} <
|\pi|_{p}^{-1}p^{-1/(p-1)}$ if $x$ lies in a finite extension $K$ of
$\mathbb{Q}_{p}$ whose ring of integers has maximal ideal $(\pi)$
with ramification index over $\mathbb{Q}_{p}$ less than $p-1$.
\end{enumerate}
\end{theorem}

\begin{remark} Tangedal and Young proved the similar results for $p$-adic multiple zeta functions
(see \cite[Theorem 3.2 (i), (iii), (iv), (vi)]{TP}).
\end{remark}

\begin{proof}
(1) From Theorem \ref{E-np-thm} (1), for any $f\in UD(\Z),$ we have
$$\int_{\Z}f(a+1)d\mu_{-1}(a)+\int_{\Z}f(a)d\mu_{-1}(a)=2f(0).$$
Let $f(a)=\langle x+a\rangle^{1-s}$ in the above equality, we get
$$\zeta_{p,E}(s,x+1)+\zeta_{p,E}(s,x)=2\langle x\rangle^{1-s}=\frac{2x}{\omega_v(x)\langle x\rangle^{s}},$$
since $x=\omega_v(x)\langle x\rangle.$

(2) From Theorem \ref{E-np-thm} (2), for any $f\in UD(\Z),$ we have
$$\int_{\Z}f(a+1)d\mu_{-1}(a)=\int_{\Z}f(-a)d\mu_{-1}(a).$$
Let $f(a)=\langle x-1+a\rangle^{1-s}$ in the above equality. Then
$$\begin{aligned}
\zeta_{p,E}(s,x)&=\int_{\Z}\langle x+a\rangle^{1-s}d\mu_{-1}(a)\\
&=\int_{\Z}f(a+1)d\mu_{-1}(a) \\
&=\int_{\Z}f(-a)d\mu_{-1}(a) \\
&=\int_{\Z}\langle x-1-a\rangle^{1-s}d\mu_{-1}(a) \\
&=\int_{\Z}\langle 1-x+a\rangle^{1-s}d\mu_{-1}(a) \\
&=\zeta_{p,E}(s,1-x),
\end{aligned}
$$
since $\langle x\rangle=\langle -x\rangle.$

(3) From Theorem \ref{E-np-thm} (3), for any $f\in UD(\Z),$ we have
$$\int_{\Z}f(a)d\mu_{-1}(a)=\sum_{j=0}^{m-1}(-1)^j\int_{\Z}f(j+ma)d\mu_{-1}(a).$$
Let $f(a)=\langle mx+a\rangle^{1-s}$ in the above equality. Then
$$\begin{aligned}
\zeta_{p,E}(s,mx)&=\int_{\Z}\langle mx+a\rangle^{1-s}d\mu_{-1}(a)\\
&=\int_{\Z}f(a)d\mu_{-1}(a) \\
&=\sum_{j=0}^{m-1}(-1)^j\int_{\Z}f(j+ma)d\mu_{-1}(a) \\
&=\sum_{j=0}^{m-1}(-1)^j\int_{\Z}\langle mx+j+ma\rangle^{1-s} d\mu_{-1}(a) \\
&=\langle m\rangle^{1-s}\sum_{j=0}^{m-1}(-1)^j\zeta_{p,E}\left(s,x+\frac jm\right).
\end{aligned}$$

(4) Since
$$\frac{\partial}{\partial x}\langle x+a\rangle^s=s\langle x+a\rangle^s(x+a)^{-1}
=s\frac{\langle x+a\rangle^{s-1}}{\omega_v(x)},$$
we have
$$\begin{aligned}
\frac{\partial}{\partial x}\zeta_{p,E}(s,x)
&=\frac{\partial}{\partial x}\int_{\Z}\langle x+a\rangle^{1-s}d\mu_{-1}(a)\\
&=\int_{\Z}\frac{\partial}{\partial x}\langle x+a\rangle^{1-s}d\mu_{-1}(a)\\
&=\frac{1-s}{\omega_v(x)}\int_{\Z}\langle x+a\rangle^{1-(s+1)}d\mu_{-1}(a).
\end{aligned}$$
This is complete the proof.
\end{proof}

\begin{theorem}\label{p-analytic-some-result}
If $x/u\in \C$ and $|x/u|_{p} >1$,  then
$$\zeta_{p,E}(s,x+u)=\langle
x\rangle^{1-s}\sum_{i=0}^\infty\binom{1-s}iE_i(u)\frac{1}{x^i}$$
on a disc of positive radius about $s=0.$
If in addition  $x$ is so chosen to lie in a finite extension $K$ of
$\mathbb{Q}_{p}$ whose ramification index over $\mathbb{Q}_{p}$ is
less than $p-1$, then  this formula is valid for $s$ in
$\mathbb{C}_{p}$ such that $|s|_{p} <
 |\pi|_{p}^{-1}p^{-1/(p-1)}$, where $(\pi)$ is the maximal ideal of
 the ring of integers $O_{K}$ of $K$.
\end{theorem}

\begin{proof}
Obviously, if $|x/u|_{p}>1,$ we can write $\langle
x+u\rangle=\langle x\rangle(1+u/x).$ One can check that
$$\begin{aligned}
\zeta_{p,E}(s,x+u)&=\langle x+u\rangle^{1-s}\sum_{i=0}^\infty\binom{1-s}iE_i(0)\frac1{(x+u)^{i}} \\
&=\langle x\rangle^{1-s}\sum_{i=0}^\infty\binom{1-s}iE_i(0)\frac{1}{x^i}\left(1+\frac ux\right)^{1-s-i} \\
&=\langle x\rangle^{1-s}\sum_{i=0}^\infty\binom{1-s}iE_i(0)\frac{1}{x^i}\sum_{j=0}^\infty\binom{1-s-i}ju^j\frac{1}{x^j} \\
&=\langle x\rangle^{1-s}\sum_{n=0}^\infty\sum_{i=0}^n\binom{1-s}i\binom{1-s-i}{n-i}E_i(0)u^{n-i}\frac{1}{x^n} \\
&=\langle x\rangle^{1-s}\sum_{n=0}^\infty\binom{1-s}n\frac{1}{x^n}\sum_{i=0}^n\binom niE_i(0)u^{n-i} \\
&=\langle
x\rangle^{1-s}\sum_{n=0}^\infty\binom{1-s}nE_n(u)\frac{1}{x^n}
\end{aligned}$$
and so the result is established.
\end{proof}

We now prove a Raabe formula for $\zeta_{p,E}$ (cf.
\cite[Proposition 3.1]{CF}).

\begin{theorem}\label{p-int-analytic-ft}
If  $x\in \C$ and $|x|_{p} > 1$, then
$$\int_{\Z}\zeta_{p,E}(s,x+a)d\mu_{-1}(a)=2\left(1+\frac1x\right)\zeta_{p,E}(s,x)
-\frac2{x\langle x\rangle}\zeta_{p,E}(s-1,x)$$ this identity  holds
if $s\in\mathbb{Z}_{p}$, also holds for $|s|_{p} <
|\pi|_{p}^{-1}p^{-1/(p-1)}$ if $x$ lies in a finite extension $K$ of
$\mathbb{Q}_{p}$ whose ring of integers has maximal ideal $(\pi)$
with ramification index over $\mathbb{Q}_{p}$ less than $p-1$.
\end{theorem}
\begin{proof}
Note that
$$\frac12\left(\frac{2e^{2xt}}{e^{t}+1}\right)^2=\frac{2(1-2x)e^{2xt}}{e^{t}+1}
+\frac{\text d}{\text{d}t}\left(\frac{2e^{2xt}}{e^{t}+1}\right)$$
(cf. \cite{Ge}). From (\ref{def-e-poly}) and (\ref{int-Em}), we get
the identity
\begin{equation}\label{p-int-lem}
\sum_{i=0}^n\binom
niE_i(x)E_{n-i}(x)=2((1-2x)E_n(2x)+E_{n+1}(2x)),\quad n\geq0.
\end{equation}
Taking $x=0$ in (\ref{p-int-lem}), we have
\begin{equation*}\begin{aligned}\int_{\Z}E_n(a)d\mu_{-1}(a)&=\sum_{i=0}^n\binom ni E_{n-i}(0)\int_{\Z}a^id\mu_{-1}(a)
\\&=2(E_n(0)+E_{n+1}(0)),\end{aligned}\end{equation*}
since $E_n(a)=\sum_{i=0}^n\binom ni E_{n-i}(0)a^i.$ From Theorem
\ref{p-analytic-some-result}, it follows that
\begin{equation}\label{zet-in-id}
\begin{aligned}
&\quad\int_{\Z}\zeta_{p,E}(s,x+a)d\mu_{-1}(a)\\
&=\langle x\rangle^{1-s}\sum_{i=0}^\infty\binom{1-s}i \frac{1}{x^i}\int_{\Z}E_i(a)d\mu_{-1}(a) \\
&=2\langle x\rangle^{1-s}\sum_{i=0}^\infty\binom{1-s}i \frac{1}{x^i}(E_i(0)+E_{i+1}(0))\\
&=2\zeta_{p,E}(s,x)+ 2\langle
x\rangle^{1-s}\sum_{i=0}^\infty\binom{1-s}i \frac{1}{x^i}E_{i+1}(0).
\end{aligned}
\end{equation}
Using $\frac{\partial}{\partial x},$ we rewrite the function
$\langle x\rangle^{1-s}\sum_{i=0}^\infty\binom{1-s}i
x^{-i}E_{i+1}(0)$ in the right-hand side. If $s\in\mathbb{Z}_{p}$ or
$|s|_{p} < |\pi|_{p}^{-1}p^{-1/(p-1)}$ when $x$ lies in a finite
extension $K$ of $\mathbb{Q}_{p}$ whose ring of integers has maximal
ideal $(\pi)$ with ramification index over $\mathbb{Q}_{p}$ less
than $p-1$, we obtain
$$\begin{aligned}
\frac{\partial}{\partial x}\zeta_{p,E}(s-1,x)
&=(2-s)\biggl(\frac{1}{x}\zeta_{p,E}(s-1,x)\\&\quad+\langle
x\rangle^{2-s}\sum_{i=0}^\infty\binom{1-s}i
\frac{1}{x^i}E_{i+1}(0)\biggl).
\end{aligned}$$
Hence
$$\begin{aligned}\langle x\rangle^{1-s}\sum_{i=0}^\infty\binom{1-s}i \frac{1}{x^i}E_{i+1}(0)
&=\frac{\langle x\rangle^{-1}}{2-s}\biggl(\frac{\partial}{\partial
x}\zeta_{p,E}(s-1,x)\\&\quad-\frac{2-s}{x}
\zeta_{p,E}(s-1,x)\biggl).\end{aligned}$$ Substituting this identity
in (\ref{zet-in-id}), it is easy to prove the desired result using
Theorem \ref{p-analytic-some-result23} (4).
\end{proof}

\section{The $p$-adic Hurwitz-type Euler zeta functions for $x\in\mathbb{Z}_{p}$}\label{Results2}

Let $\chi$ be a Dirichlet character modulo $p^{v}$ for some $v$. We can extend the definition of $\chi$ to $\mathbb{Z}_{p}$ as in \cite[p.\,281]{Co2}. In this section we define the $p$-adic Hurwitz-type Euler zeta functions for $x\in\mathbb{Z}_{p}$.

\begin{definition}\label{de2}
Let $\chi$ be a character modulo $p^{v}$ with $v\geq 1$. If
$x\in\mathbb{Z}_{p}$ and $s\in\mathbb{C}_{p}$ such that
$|s|_{p}<R_{p}=p^{(p-2)/(p-1)}$  we define
$$\zeta_{p,E}(\chi,s,x)=\int_{\mathbb{Z}_{p}}\chi(x+a)\langle x+a \rangle^{1-s}d\mu_{-1}(a),$$
and we will simply write $\zeta_{p,E}(s,x)$ instead of $\zeta_{p,E}(\chi_{0},s,x)$, where $\chi_{0}$ is a trivial character modulo $p^{v}$.\end{definition}

We show that this definition makes sense. First we prove the following lemma on the change of variable for fermionic $p$-adic integrals which is an analogy of Lemma 11.2.3 in \cite{Co2} on the change of variable for Volkenborn $p$-adic integrals.

\begin{lemma}
Let $\chi$ be a character modulo $p^{v}$, let $f$ be a function defined for $v_{p}(x)< -v$ such that for fixed $x$ the function $f(x+a)$ is in $UD(\mathbb Z_p)$, and set $$g(x)=\int_{\mathbb{Z}_{p}}f(x+a)d\mu_{-1}(a).$$ Then for $x\in\mathbb{Z}_{p}$ we have $$\sum_{j=0}^{p^{v}-1}\chi(x+j)g\left(\frac{x+j}{p^{v}}\right)(-1)^{j}
=\int_{\mathbb{Z}_{p}}\chi(x+a)f\left(\frac{x+a}{p^{v}}\right)d\mu_{-1}(a).$$\end{lemma}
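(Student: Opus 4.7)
The natural approach is to expand the right-hand side directly from the definition \eqref{-q-e} of the fermionic $p$-adic integral and then partition the summation index modulo $p^v$. Concretely, I would write
\[
\int_{\mathbb Z_p}\chi(x+a)f\!\left(\frac{x+a}{p^v}\right)d\mu_{-1}(a)
=\lim_{N\to\infty}\sum_{a=0}^{p^N-1}\chi(x+a)f\!\left(\frac{x+a}{p^v}\right)(-1)^a,
\]
restrict to $N\geq v$, and perform the change of variables $a=j+p^v k$ with $0\leq j\leq p^v-1$ and $0\leq k\leq p^{N-v}-1$.

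The three key algebraic facts that make this substitution work are:
(i) $\chi(x+a)=\chi(x+j)$, since $\chi$ has modulus $p^v$ (after the extension of $\chi$ to $\mathbb Z_p$ recalled above);
(ii) $\dfrac{x+a}{p^v}=\dfrac{x+j}{p^v}+k$, which is trivial;
(iii) $(-1)^{a}=(-1)^{j+p^v k}=(-1)^j(-1)^k$, which uses that $p$ is an odd prime, so $p^v$ is odd. This last point is exactly where the fermionic setting interacts nontrivially with the hypothesis $p>2$ made in the introduction; it is the place where a Volkenborn-integral argument would differ.

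After these substitutions the partial sum factorizes as
\[
\sum_{j=0}^{p^v-1}\chi(x+j)(-1)^j\sum_{k=0}^{p^{N-v}-1}f\!\left(\tfrac{x+j}{p^v}+k\right)(-1)^k.
\]
Since the outer sum is a finite sum (independent of $N$), I can pull the limit inside, and for each fixed $j$ the inner limit is exactly the defining expression of $g\bigl(\tfrac{x+j}{p^v}\bigr)$ by \eqref{-q-e}. This yields the desired identity.

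The only real subtlety, and the point I would be most careful about, is the domain issue: one must ensure $f$ is evaluated at arguments in its domain. When $\chi(x+j)\neq 0$ we have $p\nmid x+j$ so $v_p\bigl(\tfrac{x+j}{p^v}+k\bigr)=-v$ for every $k\in\mathbb Z_p$, putting these arguments in the range where $f$ is defined; when $\chi(x+j)=0$ the corresponding term is annihilated and no evaluation of $f$ is needed. This is the analogue of the observation made in the proof of Lemma~11.2.3 of~\cite{Co2}, and is the step where the hypothesis on the domain of $f$ must be invoked with care.
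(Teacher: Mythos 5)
Your proof is correct and is essentially the paper's own argument run in the opposite direction: the paper starts from the left-hand side and merges the double sum via $m=j+p^{v}a$, while you expand the right-hand side and split $a=j+p^{v}k$, using the same three facts (periodicity of $\chi$ modulo $p^{v}$, the identity $\frac{x+a}{p^{v}}=\frac{x+j}{p^{v}}+k$, and $(-1)^{p^{v}k}=(-1)^{k}$ since $p$ is odd). Your added remarks on the domain of $f$ and on the terms with $\chi(x+j)=0$ are reasonable care the paper omits, but they do not change the substance of the argument.
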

\begin{proof} By definition, we have
\begin{equation*}\label{change}
\begin{aligned}
&\quad\sum_{j=0}^{p^{v}-1}\chi(x+j)g\left(\frac{x+j}{p^{v}}\right)(-1)^{j}
\\&=\lim_{N\to\infty}\sum_{j=0}^{p^{v}-1}\chi(x+j)(-1)^{j} \sum_{a=0}^{p^{N}-1}f\left(a+\frac{x+j}{p^{v}}\right)(-1)^{a}\\
&=\lim_{N\to\infty}\sum_{m=0}^{p^{v+N}-1}\chi(x+m)f\left(\frac{x+m}{p^{v}}\right)(-1)^{m}\\
&=\int_{\mathbb{Z}_{p}}\chi(x+a)f\left(\frac{x+a}{p^{v}}\right)d\mu_{-1}(a).
\end{aligned}
\end{equation*}
This completes the proof.
\end{proof}

\begin{corollary}\label{representation}
Definition \ref{de2} makes sense for $x\in\mathbb{Z}_{p}$ and $|s|_{p} < R_{p}$. More precisely, for any odd positive integer $M$ such that $p^{v} | M$, we
 have
 $$\zeta_{p,E}(\chi,s,x)=\sum_{j=0}^{M-1}\chi(x+j)\zeta_{p,E}\left(s,\frac{x+j}{M}\right)(-1)^{j}.$$
 \end{corollary}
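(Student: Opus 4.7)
The plan is to deduce the identity in two stages: first establish the special case $M=p^v$ by direct application of the preceding change-of-variable lemma, then extend to arbitrary odd $M$ with $p^v\mid M$ using the $N$-fold distribution formula of Theorem~\ref{p-analytic-some-result}(3).

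For $M=p^v$, I would apply the lemma with $f(y)=\langle y\rangle^{1-s}$. With this choice, $g(y)=\int_{\Z}\langle y+a\rangle^{1-s}d\mu_{-1}(a)=\zeta_{p,E}(s,y)$ for $y\in C\Z$. The key observation is that $\langle z\rangle=\langle z/p^v\rangle$ by (\ref{lara-ex}), since $\langle\cdot\rangle$ depends only on the unit part of its argument; consequently the right-hand side of the lemma reduces to
$$\int_{\Z}\chi(x+a)\langle (x+a)/p^v\rangle^{1-s}d\mu_{-1}(a)=\int_{\Z}\chi(x+a)\langle x+a\rangle^{1-s}d\mu_{-1}(a)=\zeta_{p,E}(\chi,s,x).$$
Because $\chi(x+j)=0$ whenever $p\mid x+j$, only indices with $v_p(x+j)=0$ contribute, so $v_p((x+j)/p^v)=-v\leq -1$ and $(x+j)/p^v\in C\Z$; hence each $\zeta_{p,E}(s,(x+j)/p^v)$ is defined. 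This settles the corollary for $M=p^v$.

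For general $M$, write $M=p^v N$ with $N$ odd (possible since both $M$ and $p$ are odd). On the support of $\chi(x+j)$ we have $(x+j)/M\in C\Z$, so the distribution formula of Theorem~\ref{p-analytic-some-result}(3) applied with $y=(x+j)/M$ yields
$$\zeta_{p,E}\bigl(s,(x+j)/p^v\bigr)=\sum_{k=0}^{N-1}(-1)^k\zeta_{p,E}\bigl(s,(x+j+p^vk)/M\bigr).$$
Substituting into the $M=p^v$ identity, using $\chi(x+j+p^vk)=\chi(x+j)$ (periodicity modulo $p^v$) together with $(-1)^{j+p^vk}=(-1)^j(-1)^k$ (since $p^v$ is odd), and reindexing via the bijection $(j,k)\mapsto \ell:=j+p^vk$ between $\{0,\ldots,p^v-1\}\times\{0,\ldots,N-1\}$ and $\{0,\ldots,M-1\}$, produces the claimed formula.

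The main technical point is the bookkeeping of $p$-adic valuations, to ensure every argument of $\zeta_{p,E}(s,\cdot)$ appearing in the manipulation lies in $C\Z$. The convention that $\chi$ vanishes at multiples of $p$ handles this uniformly: on its support both $(x+j)/p^v$ and $(x+j+p^vk)/M$ lie in $C\Z$, so there is no issue with the validity of Definition~\ref{p-E-zeta} or of the distribution formula. The well-definedness asserted in Definition~\ref{de2} then falls out as an immediate by-product of the argument.
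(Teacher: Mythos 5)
Your proof is correct and follows essentially the same route as the paper: the case $M=p^v$ via the change-of-variable lemma with $f(y)=\langle y\rangle^{1-s}$ (using $\langle z/p^v\rangle=\langle z\rangle$, i.e.\ $\langle p^v\rangle=1$), and the general case by the decomposition $j=b+p^v a$ together with the distribution formula of Theorem~\ref{p-analytic-some-result}(3). The only difference is cosmetic (you expand the $p^v$ identity to the $M$-sum rather than collapsing the $M$-sum), and your explicit remark that terms with $p\mid x+j$ vanish because $\chi(x+j)=0$, keeping all arguments in $C\mathbb{Z}_p$, is a welcome extra precision.
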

\begin{proof} First, we prove this Corollary for $M=p^{v}$. Applying the above lemma to $f(x)=\langle x \rangle^{1-s}$, we have $$g(x)=\int_{\mathbb{Z}_{p}}\langle x+a \rangle^{1-s}\mu_{-1}(a)=\zeta_{p,E}(s,x),$$ thus
\begin{equation}\label{mid}
\begin{aligned}&\quad\sum_{j=0}^{p^{v}-1}\chi(x+j)\zeta_{p,E}\left(s,\frac{x+j}{p^{v}}\right)(-1)^{j}\\&= \langle p^{v}\rangle^{s-1}\int_{\mathbb{Z}_{p}}\chi(x+a)\langle x+a \rangle^{1-s}d\mu_{-1}(a)\\&=\zeta_{p,E}(\chi,s,x),
 \end{aligned}
\end{equation}
since $\langle p^{v}\rangle=1$.
For a general old positive integer $M$, we write $M=Np^{v}$ and $j=p^{v}a+b$ with $0\leq b< p^{v}$ and $0\leq a< N$, so that
\begin{equation*}
\begin{aligned}&\quad\sum_{j=0}^{M-1}\chi(x+j)\zeta_{p,E}\left(s,\frac{x+j}{M}\right)(-1)^{j}\\
&=\sum_{b=0}^{p^{v}-1}\chi(x+b)(-1)^{b}\sum_{a=0}^{N-1}\zeta_{p,E}\left(s,\frac{x+b}{Np^{v}}+\frac{a}{N}\right)(-1)^{a}\\
&=\sum_{b=0}^{p^{v}-1} \chi(x+b)\zeta_{p,E}\left(s,\frac{x+b}{p^{v}}\right)(-1)^{b}\\&=\zeta_{p,E}(\chi,s,x), \end{aligned}
\end{equation*}
using Theorem \ref{p-analytic-some-result23} (3) and (\ref{mid}).\end{proof}

\begin{corollary}\label{zeta-2}
Suppose that $x\in \Z.$ We have
$$\zeta_{p,E}(\chi,1,x)=\sum_{j=0}^{p^{v}-1}\chi(x+j)(-1)^{j}.$$
\end{corollary}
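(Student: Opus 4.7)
The plan is to derive the identity directly from the representation formula in Corollary~\ref{representation} by specializing $s=1$ and $M=p^v$, and then to invoke Theorem~\ref{zeta-1} to evaluate each inner zeta value.

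First I would set $s = 1$ and $M = p^v$ (which is odd since $p$ is an odd prime) in Corollary~\ref{representation}, obtaining
$$\zeta_{p,E}(\chi,1,x)=\sum_{j=0}^{p^{v}-1}\chi(x+j)\,\zeta_{p,E}\!\left(1,\frac{x+j}{p^{v}}\right)(-1)^{j}.$$
To apply Theorem~\ref{zeta-1} to the inner factor, I need each relevant argument $(x+j)/p^v$ to lie in $C\mathbb{Z}_p=\mathbb{Q}_p\setminus\mathbb{Z}_p$.

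The key observation is that in the terms which actually contribute to the sum, this is automatic. Since $\chi$ is a character modulo $p^v$ (with $v\geq 1$), $\chi(x+j)=0$ whenever $p\mid (x+j)$. For the surviving terms we have $p\nmid (x+j)$, so $x+j$ is a $p$-adic unit and hence $v_p\!\left((x+j)/p^v\right)=-v\leq -1$, which places $(x+j)/p^v$ in $C\mathbb{Z}_p$. Theorem~\ref{zeta-1} then yields $\zeta_{p,E}(1,(x+j)/p^v)=1$ for every such $j$, while the vanishing terms contribute nothing regardless of the value of the zeta factor.

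Substituting these values back produces
$$\zeta_{p,E}(\chi,1,x)=\sum_{j=0}^{p^{v}-1}\chi(x+j)(-1)^{j},$$
which is the claimed identity. There is no real obstacle here: the only point one has to be careful about is the a priori worry that $(x+j)/p^v$ could fall in $\mathbb{Z}_p$ for some $j$, making Theorem~\ref{zeta-1} inapplicable, but the character $\chi$ kills precisely those problematic terms, so the argument is clean.
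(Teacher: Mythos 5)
Your proof is correct and follows essentially the same route as the paper: specialize Corollary~\ref{representation} to $s=1$, $M=p^{v}$, and evaluate each inner factor by Theorem~\ref{zeta-1}. The only difference is that you explicitly verify $(x+j)/p^{v}\in C\mathbb{Z}_{p}$ (noting that $\chi(x+j)=0$ kills the terms with $p\mid(x+j)$), a point the paper leaves implicit.
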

\begin{proof}
$$\zeta_{p,E}(\chi,1,x)=\sum_{j=0}^{p^{v}-1}\chi(x+j)\zeta_{p,E}\left(1,\frac{x+j}{p^{v}}\right)(-1)^{j}=\sum_{j=0}^{p^{v}-1}\chi(x+j)(-1)^{j}$$
using Corollary \ref{representation} and  Theorem \ref{zeta-1}.
\end{proof}

\begin{corollary}\label{C4.5}
Let $\chi$ be a character modulo $p^{v}$. Then for any $x\in\mathbb{Q}_{p}$ and any odd positive integer $N$ such that $p^{v}|N$ and $Nx\in\mathbb{Z}_{p}$ we have $$\sum_{j=0}^{N-1}\chi(Nx+j)\zeta_{p,E}\left(s,x+\frac{j}{N}\right)(-1)^{j}=\zeta_{p,E}(\chi,s,Nx).$$
In particular,
$$\sum_{\substack{j=0 \\ p\nmid(Nx+j)}}^{N-1}\zeta_{p,E}\left(s,x+\frac{j}{N}\right)(-1)^{j}=\zeta_{p,E}(s,Nx),$$
where we recall that we have defined $\zeta_{p,E}(s,x)=\zeta_{p,E}(\chi_{0},s,x)$ when $x\in\mathbb{Z}_{p}$.\end{corollary}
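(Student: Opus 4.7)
The plan is to obtain the first identity as a direct specialization of Corollary \ref{representation}, and then derive the ``in particular'' formula by choosing $\chi=\chi_0$.

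For the main identity, the key observation is that the hypotheses of Corollary \ref{representation} are satisfied with $x$ replaced by $Nx$ and $M$ replaced by $N$: indeed, $Nx\in\mathbb{Z}_p$ by assumption, $N$ is an odd positive integer, and $p^{v}\mid N$. Applying that Corollary therefore yields
$$\zeta_{p,E}(\chi,s,Nx)=\sum_{j=0}^{N-1}\chi(Nx+j)\,\zeta_{p,E}\!\left(s,\frac{Nx+j}{N}\right)(-1)^{j}.$$
Since $(Nx+j)/N=x+j/N$, this is exactly the asserted distribution formula. Notice that although $x\in\mathbb{Q}_{p}$ need not lie in $\mathbb{Z}_{p}$, the arguments $x+j/N$ appearing on the right-hand side are in $C\mathbb{Z}_{p}\cup\mathbb{Z}_{p}$, so $\zeta_{p,E}(s,x+j/N)$ is defined either by Definition \ref{p-E-zeta} or by Definition \ref{de2} with the trivial character; in either case the value agrees with $g((Nx+j)/N)$ for $g(y)=\int_{\mathbb{Z}_{p}}\langle y+a\rangle^{1-s}d\mu_{-1}(a)$, which is what the proof of Corollary \ref{representation} actually uses.

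For the ``in particular'' part, specialize to $\chi=\chi_0$, the trivial character modulo $p^{v}$. Then $\chi_0(Nx+j)=1$ when $p\nmid (Nx+j)$ and $\chi_0(Nx+j)=0$ otherwise, so the sum collapses to the indices $j$ with $p\nmid (Nx+j)$. The left-hand side is $\zeta_{p,E}(\chi_0,s,Nx)$, which equals $\zeta_{p,E}(s,Nx)$ by the convention fixed in Definition \ref{de2}, giving the stated formula.

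I expect no real obstacle in the argument: the whole content is packaged into Corollary \ref{representation}, and the present statement is its reformulation after the substitutions $x\mapsto Nx$, $M\mapsto N$. The only point that deserves care is the remark above about $x\in\mathbb{Q}_p\setminus\mathbb{Z}_p$, where one must check that the fractional arguments $x+j/N$ fit the domain of the previously defined zeta functions, and that the two definitions (for $C\mathbb{Z}_p$ and for $\mathbb{Z}_p$) match on the intersection of cases treated, which is immediate from Definition \ref{de2} with $\chi=\chi_0$ and the fact that $\chi_0(y)=1$ whenever $y\in\mathbb{Z}_{p}^{\times}$.
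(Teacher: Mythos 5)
Your proof is correct and follows essentially the same route as the paper, whose entire proof is to apply Corollary~\ref{representation} with $M=N$ and $x$ replaced by $Nx$. The extra remarks you add (the trivial-character specialization for the ``in particular'' part and the check that the arguments $x+j/N$ lie in the domains already covered) are sound and only make explicit what the paper leaves implicit.
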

\begin{proof}
Follows from Corollary \ref{representation} applied to $M=N$ and $x$ is replaced by $Nx$.
\end{proof}

We now begin to study the properties for the functions $\zeta_{p,E}(\chi,s,x)$ for $x\in\mathbb{Z}_{p}$.

\begin{proposition}\label{der}
If $x\in\mathbb{Z}_{p}$ we have $$\frac{\partial\zeta_{p,E}}{\partial x}(\chi,s,x)=(1-s)\zeta_{p,E}(\chi\omega^{-1},s+1,x).$$\end{proposition}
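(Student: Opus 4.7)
The plan is to differentiate the explicit finite-sum representation of $\zeta_{p,E}(\chi,s,x)$ supplied by Corollary \ref{representation}, apply the known derivative formula for the $C\Z$ version of the zeta function, and then recognize the result as the corresponding representation of $\zeta_{p,E}(\chi\omega^{-1},s+1,x)$.

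First, I would take $M=p^v$ in Corollary \ref{representation}, giving
$$\zeta_{p,E}(\chi,s,x)=\sum_{j=0}^{p^v-1}\chi(x+j)(-1)^{j}\zeta_{p,E}\!\left(s,\tfrac{x+j}{p^v}\right).$$
Since $\chi$ has conductor dividing $p^v$, the only $j$ that contribute are those with $p\nmid(x+j)$; for such $j$ one has $v_p((x+j)/p^v)=-v<0$, so $y_j:=(x+j)/p^v\in C\Z$ and the analytic-in-$s$, differentiable-in-$x$ theory of Section 3 applies directly. I can then differentiate term-by-term in $x$, using the chain rule $\partial y_j/\partial x=p^{-v}$ together with Theorem \ref{p-E-zeta-val}(3), to obtain
$$\frac{\partial \zeta_{p,E}}{\partial x}(\chi,s,x)=\sum_{j=0}^{p^v-1}\chi(x+j)(-1)^j\cdot\frac{1-s}{p^v\,\omega_v(y_j)}\,\zeta_{p,E}(s+1,y_j).$$

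Next I would compute $\omega_v(y_j)$ explicitly. Using (\ref{omega-ex}) with $v_p(y_j)=-v$ and $y_j/p^{-v}=x+j$, we get $\omega_v(y_j)=p^{-v}\omega(x+j)$, hence $(p^v\omega_v(y_j))^{-1}=\omega^{-1}(x+j)$. Substituting this back collapses the prefactor into the character $\chi\omega^{-1}$ (which is again a character modulo $p^v$, extended by $0$ on multiples of $p$ — and this is consistent, since both $\chi(x+j)$ and $\omega(x+j)$ vanish for $p\mid(x+j)$, matching the fact that such terms were already excluded). Thus
$$\frac{\partial \zeta_{p,E}}{\partial x}(\chi,s,x)=(1-s)\sum_{j=0}^{p^v-1}(\chi\omega^{-1})(x+j)(-1)^j\,\zeta_{p,E}\!\left(s+1,\tfrac{x+j}{p^v}\right),$$
and a second application of Corollary \ref{representation}, now for the character $\chi\omega^{-1}$ modulo $p^v$ and the parameter $s+1$, identifies the right-hand side with $(1-s)\zeta_{p,E}(\chi\omega^{-1},s+1,x)$.

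There is no substantive obstacle here: the argument is a reduction from $\Z$ to $C\Z$ via the distribution-style representation of Corollary \ref{representation}, followed by the chain rule and a small bookkeeping computation of $\omega_v$. The only delicate point that merits a sentence in the write-up is verifying that the terms with $p\mid(x+j)$ really do drop out on both sides (they give $\chi(x+j)=0$ on the left and $(\chi\omega^{-1})(x+j)=0$ on the right), so that one may freely apply Theorem \ref{p-E-zeta-val}(3), whose hypothesis requires $y_j\in C\Z$.
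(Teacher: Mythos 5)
Your proof is correct and follows essentially the same route as the paper's: apply Corollary \ref{representation} with $M=p^{v}$, differentiate term by term via Theorem \ref{p-E-zeta-val}(3) (with the chain-rule factor $p^{-v}$), identify $\bigl(p^{v}\omega_{v}((x+j)/p^{v})\bigr)^{-1}=\omega^{-1}(x+j)$, and reassemble with Corollary \ref{representation} for $\chi\omega^{-1}$. Your extra remark that the terms with $p\mid(x+j)$ vanish on both sides is a point the paper leaves implicit, and it is a worthwhile clarification.
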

\begin{proof} By  Corollary \ref{representation} and
Theorem \ref{p-analytic-some-result23} (4) we have \begin{equation*}
\begin{aligned}\frac{\partial\zeta_{p,E}}{\partial x}(\chi,s,x)&=(1-s)\sum_{j=0}^{p^{v}-1}\frac{\chi(x+j)}{p^{v}\omega_{v}\left(\frac{x+j}{p^{v}}\right)}
\zeta_{p,E}\left(s+1,\frac{x+j}{p^{v}}\right)(-1)^{j}\\&=(1-s)\sum_{j=0}^{p^{v}-1}\chi\omega^{-1}(x+j)
\zeta_{p,E}\left(s+1,\frac{x+j}{p^{v}}\right)(-1)^{j}\\&=(1-s)\zeta_{p,E}(\chi\omega^{-1},s+1,x).
\end{aligned}
\end{equation*}
This completes the proof.
\end{proof}

\begin{proposition}\label{analytic2}
For fixed $x\in\mathbb{Z}_{p}$ the function $\zeta_{p,E}(\chi,s,x)$ is a p-adic analytic function on $|s|_{p} < R_{p}$.
\end{proposition}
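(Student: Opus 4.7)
The plan is to reduce the claim for $x \in \mathbb{Z}_p$ to the case $x \in C\mathbb{Z}_p$, which was already settled in Theorem~\ref{p-analytic}. The bridge is Corollary~\ref{representation} applied to $M = p^v$ (which is admissible because $p$ is odd), yielding the finite-sum representation
$$\zeta_{p,E}(\chi, s, x) = \sum_{j=0}^{p^v-1} \chi(x+j)\, \zeta_{p,E}\!\left(s, \frac{x+j}{p^v}\right)(-1)^j.$$

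First I would observe that since $\chi$ is a character modulo $p^v$, the coefficient $\chi(x+j)$ vanishes whenever $p \mid (x+j)$. So only the indices $j$ with $v_p(x+j) = 0$ contribute to the sum. For every such surviving index, the argument $\frac{x+j}{p^v}$ has $p$-adic valuation $-v < 0$ and therefore lies in $C\mathbb{Z}_p = \mathbb{Q}_p \setminus \mathbb{Z}_p$. This is the key observation that allows Theorem~\ref{p-analytic} to apply term by term.

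Next I would invoke Theorem~\ref{p-analytic}, which guarantees that for every $y \in C\mathbb{Z}_p$ the map $s \mapsto \zeta_{p,E}(s, y)$ is $p$-adic analytic on the disk $|s|_p < R_p$. Since $\zeta_{p,E}(\chi, s, x)$ is exhibited as a finite $\mathbb{C}_p$-linear combination (with coefficients $\chi(x+j)(-1)^j$ independent of $s$) of such analytic functions, it is itself $p$-adic analytic on the same disk, as required.

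There is no serious obstacle in this argument; the only subtlety is the bookkeeping that confirms the arguments $\frac{x+j}{p^v}$ occurring with nonzero coefficient all land in $C\mathbb{Z}_p$, so that no term needs any separate treatment. In principle one could alternatively expand $\zeta_{p,E}(s, (x+j)/p^v)$ via the Laurent expansion in Theorem~\ref{p-analytic} and verify analyticity directly, but the reduction approach is cleaner.
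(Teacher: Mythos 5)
Your proposal is correct and follows exactly the paper's route: the paper's proof is the one-line reduction via Corollary~\ref{representation} (with $M=p^v$) combined with Theorem~\ref{p-analytic}. You merely spell out the detail the paper leaves implicit, namely that $\chi(x+j)=0$ kills any term whose argument $(x+j)/p^v$ might fail to lie in $C\mathbb{Z}_p$, so each surviving summand is analytic and the finite sum is too.
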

\begin{proof}
This is from Corollary \ref{representation} and Theorem \ref{p-analytic}.
\end{proof}

\begin{corollary}
We have $$\frac{\partial\zeta_{p,E}}{\partial x}(\chi\omega,0,x)=\sum_{j=0}^{p^{v}-1}\chi(x+j)(-1)^{j}.$$
\end{corollary}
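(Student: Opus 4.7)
The plan is to chain together Proposition~\ref{der} and Corollary~\ref{zeta-2}. Since the statement involves the character $\chi\omega$, I would first apply Proposition~\ref{der} with $\chi$ replaced by $\chi\omega$: this gives
\begin{equation*}
\frac{\partial\zeta_{p,E}}{\partial x}(\chi\omega,s,x)=(1-s)\zeta_{p,E}(\chi\omega\cdot\omega^{-1},s+1,x)=(1-s)\zeta_{p,E}(\chi,s+1,x),
\end{equation*}
where I use the multiplicativity of characters to simplify $\chi\omega\cdot\omega^{-1}=\chi$.

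Next I specialize to $s=0$. The factor $(1-s)$ becomes $1$, so the right-hand side collapses to $\zeta_{p,E}(\chi,1,x)$. At this point Corollary~\ref{zeta-2} applies directly: it evaluates $\zeta_{p,E}(\chi,1,x)$ as $\sum_{j=0}^{p^{v}-1}\chi(x+j)(-1)^{j}$, which is the desired identity.

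There is essentially no obstacle here — the result is an immediate specialization of the derivative formula of Proposition~\ref{der} combined with the explicit value at $s=1$ from Corollary~\ref{zeta-2}. The only subtlety worth noting is that $\chi\omega\cdot\omega^{-1}=\chi$ is understood as an equality of characters on $\mathbb{Z}_{p}$ via the extension convention from \cite[p.\,281]{Co2}, so the formula of Proposition~\ref{der} can legitimately be invoked with $\chi\omega$ in place of $\chi$; and the analyticity in $s$ (Proposition~\ref{analytic2}) is what makes the substitution $s=0$ meaningful. No further computation is required.
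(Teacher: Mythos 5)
Your proposal is correct and follows essentially the same route as the paper: apply Proposition~\ref{der} with $\chi\omega$ in place of $\chi$, use $\chi\omega\cdot\omega^{-1}=\chi$, then evaluate at $s=0$ (the paper phrases this as $\lim_{s\to 0}$, justified by the analyticity of Proposition~\ref{analytic2}, which you also invoke) and conclude with Corollary~\ref{zeta-2}. No substantive difference.
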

\begin{proof}
By Proposition \ref{der}, Proposition \ref{analytic2} and Corollary \ref{zeta-2} we have
\begin{equation*}
\begin{aligned}\frac{\partial\zeta_{p,E}}{\partial x}(\chi\omega,0,x)&=\lim_{s\to 0}\frac{\partial\zeta_{p,E}}{\partial x}(\chi\omega,s,x)\\&=\lim_{s\to 0}(1-s)\zeta_{p,E}(\chi,s+1,x)\\&=\zeta_{p,E}(\chi,1,x)\\&=\sum_{j=0}^{p^{v}-1}\chi(x+j)(-1)^{j}.
\end{aligned}
\end{equation*}
This completes the proof.
\end{proof}

\begin{proposition}For $k>0$ we have $$\zeta_{p,E}(\chi\omega^{k},1-k,x) = p^{vk}\sum_{j=0}^{p^{v}-1}\chi(x+j)E_{k}\left(\frac{x+j}{p^{v}}\right)(-1)^{j}.$$
 \end{proposition}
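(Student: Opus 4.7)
The plan is to reduce the statement to the special-value formula Theorem~\ref{p-E-zeta-val}(2) via the distribution-type identity of Corollary~\ref{representation}. (I read the condition ``$k\le1$'' as a typographical reversal of ``$k\ge1$'', since Theorem~\ref{p-E-zeta-val}(2) identifies $\zeta_{p,E}(1-k,\cdot)$ with $E_k$ only for $k>0$, and otherwise the Euler polynomial $E_k$ on the right-hand side would not be defined in this context.)

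First, apply Corollary~\ref{representation} with $M=p^{v}$ and character $\chi\omega^{k}$, obtaining
$$\zeta_{p,E}(\chi\omega^{k},1-k,x)=\sum_{j=0}^{p^{v}-1}(\chi\omega^{k})(x+j)\,\zeta_{p,E}\!\left(1-k,\tfrac{x+j}{p^{v}}\right)(-1)^{j}.$$
Next, note that $(\chi\omega^{k})(x+j)=0$ whenever $p\mid(x+j)$, so only indices with $\gcd(x+j,p)=1$ contribute; for each such $j$ one has $(x+j)/p^{v}\in C\Z$, so Theorem~\ref{p-E-zeta-val}(2) applied with $m=k>0$ yields
$$\zeta_{p,E}\!\left(1-k,\tfrac{x+j}{p^{v}}\right)=\frac{1}{\omega_{v}^{k}\!\left(\tfrac{x+j}{p^{v}}\right)}\,E_{k}\!\left(\tfrac{x+j}{p^{v}}\right).$$

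The key technical step is to unpack $\omega_{v}((x+j)/p^{v})$ using its definition~(\ref{omega-ex}). When $\gcd(x+j,p)=1$, we have $v_{p}((x+j)/p^{v})=-v$, so
$$\omega_{v}\!\left(\tfrac{x+j}{p^{v}}\right)=p^{-v}\,\omega\!\left(\tfrac{(x+j)/p^{v}}{p^{-v}}\right)=p^{-v}\,\omega(x+j),$$
and therefore $\omega_{v}^{k}((x+j)/p^{v})^{-1}=p^{vk}\,\omega^{-k}(x+j)$. Substituting this into the sum and letting the factor $\omega^{-k}(x+j)$ cancel the $\omega^{k}(x+j)$ coming from the character gives
$$\zeta_{p,E}(\chi\omega^{k},1-k,x)=p^{vk}\sum_{\substack{j=0\\ \gcd(x+j,p)=1}}^{p^{v}-1}\chi(x+j)\,E_{k}\!\left(\tfrac{x+j}{p^{v}}\right)(-1)^{j}.$$
Reinstating the vanishing terms (those with $p\mid(x+j)$, where $\chi(x+j)=0$) extends the summation to $0\le j\le p^{v}-1$, which is exactly the claimed identity.

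The main obstacle, and essentially the only nontrivial bookkeeping in the argument, is the computation of $\omega_{v}$ at the argument $(x+j)/p^{v}$ of $p$-adic valuation $-v$: one must track the $p^{-v}$ scaling carefully so that the $p^{vk}$ prefactor emerges and the two powers of $\omega$ cancel exactly. Once that is handled, the rest of the proof is a direct combination of Corollary~\ref{representation} and Theorem~\ref{p-E-zeta-val}(2).
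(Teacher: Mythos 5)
Your proposal is correct and follows exactly the paper's own route: Corollary~\ref{representation} with $M=p^{v}$ followed by Theorem~\ref{p-E-zeta-val}(2), with the factor $p^{vk}$ arising from $\omega_{v}\bigl(\tfrac{x+j}{p^{v}}\bigr)^{-k}=p^{vk}\omega^{-k}(x+j)$ cancelling the $\omega^{k}$ in the character (a step the paper performs silently); your reading of ``$k\le 1$'' as ``$k\ge 1$'' is also consistent with the paper's proof, which invokes Theorem~\ref{p-E-zeta-val}(2) valid only for positive exponents.
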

\begin{proof} By Corollary \ref{representation} and Theorem \ref{p-E-zeta-val} (2) we have
 \begin{equation*}
\begin{aligned}
\zeta_{p,E}(\chi\omega^{k},1-k,x)&=\sum_{j=0}^{p^{v}-1}\chi\omega^{k}(x+j)\zeta_{p,E}
\left(1-k,\frac{x+j}{p^{v}}\right)(-1)^{j}\\
&=\sum_{j=0}^{p^{v}-1}\chi\omega^{k}(x+j)\omega_{v}\left(\frac{x+j}{p^{v}}\right)^{-k}E_{k}
\left(\frac{x+j}{p^{v}}\right)(-1)^{j}\\
&=p^{vk}\sum_{j=0}^{p^{v}-1}\chi(x+j)E_{k}\left(\frac{x+j}{p^{v}}\right)(-1)^{j}.
\end{aligned}
\end{equation*}
This completes the proof.
\end{proof}

\begin{theorem} \label{reflection}
Let $x\in\mathbb{Z}_{p}$ and $|s|_{p} < R_{p}$.
\begin{enumerate}
\item[\rm(1)] We have the functional equation $$\zeta_{p,E}(\chi,s,x+1)+\zeta_{p,E}(\chi,s,x)=2\chi(x)\langle x \rangle^{1-s}.$$
\item[\rm(2)] We have the reflection formula
$$\zeta_{p,E}(\chi,s,1-x)=\chi(-1)\zeta_{p,E}(\chi,s,x).$$
\item[\rm(3)] Set $$L_{p,E}(\chi,s)=\zeta_{p,E}(\chi,s,0).$$
If $\chi$ is an even character we have $L_{p,E}(\chi,s)=0$, and more generally if $n$ is a positive integer we have
$$\begin{aligned}
&\quad\zeta_{p,E}(\chi,s,n) \\
&=\chi(-1)\left(2\sum_{j=1}^{n-1}\frac{(-1)^{j+1}\chi(j-n)}{\langle j-n\rangle^{s-1}}
+(-1)^{n+1}L_{p,E}(\chi,s)\right).
\end{aligned}$$
\item[\rm(4)] If $p\nmid N$ and $N$ is odd we have the distribution formula
$$\sum_{i=0}^{N-1}\zeta_{p}\left(\chi,s,x+\frac{i}{N}\right)(-1)^{i}=\chi^{-1}(N)\zeta_{p,E}(\chi,s,Nx).$$
\end{enumerate}
\end{theorem}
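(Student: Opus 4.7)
The plan is to derive all four parts by combining the integral representation of Definition~\ref{de2} with the decomposition in Corollary~\ref{representation}, together with the corresponding results of Section~3. For part~(1), I would use the integral representation directly: the elementary fermionic identity
\begin{equation*}
\int_{\Z}f(a+1)\,d\mu_{-1}(a)+\int_{\Z}f(a)\,d\mu_{-1}(a)=2f(0),
\end{equation*}
obtained from the telescoping $\sum_{a=0}^{p^{N}-1}f(a)(-1)^{a}+\sum_{a=0}^{p^{N}-1}f(a+1)(-1)^{a}=f(0)-(-1)^{p^{N}}f(p^{N})$, together with the fact that $p^{N}$ is odd and $f(p^{N})\to f(0)$, applied to $f(a)=\chi(x+a)\langle x+a\rangle^{1-s}$, gives~(1) at once. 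For~(2), I would apply Corollary~\ref{representation} with $M=p^{v}$ and reindex by $k=p^{v}-1-j$. Three observations: (i) $p^{v}-1$ is even, so $(-1)^{j}=(-1)^{k}$; (ii) $1-x+j\equiv-(x+k)\pmod{p^{v}}$, whence $\chi(1-x+j)=\chi(-1)\chi(x+k)$; (iii) $(1-x+j)/p^{v}=1-(x+k)/p^{v}$. On the surviving terms $p\nmid(x+k)$, so $(x+k)/p^{v}\in C\Z$, and Corollary~\ref{p-analytic-coro} collapses the $\zeta_{p,E}(s,\cdot)$ factor, so the sum becomes $\chi(-1)\zeta_{p,E}(\chi,s,x)$.

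For part~(3), the vanishing $\ell_{p,E}(\chi,s)=0$ for even $\chi$ follows from evaluating (1) and (2) at $x=0$: (1) gives $\zeta_{p,E}(\chi,s,1)+\ell_{p,E}(\chi,s)=0$ since $\chi(0)=0$, while (2) gives $\zeta_{p,E}(\chi,s,1)=\chi(-1)\ell_{p,E}(\chi,s)=\ell_{p,E}(\chi,s)$ in the even case, yielding $2\ell_{p,E}(\chi,s)=0$. For the explicit formula, I would iterate $F(m+1)=g(m)-F(m)$, with $g(k)=2\chi(k)\langle k\rangle^{1-s}$ and $F(m)=\zeta_{p,E}(\chi,s,m)$, from $m=0$ to obtain
\begin{equation*}
F(n)=\sum_{k=0}^{n-1}(-1)^{n-1-k}g(k)+(-1)^{n}\ell_{p,E}(\chi,s).
\end{equation*}
The $k=0$ term vanishes, and the reindexing $k\mapsto n-j$ together with $\chi(n-j)=\chi(-1)\chi(j-n)$ and $\langle n-j\rangle=\langle j-n\rangle$ brings the sum into the stated form. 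The prescribed coefficient $\chi(-1)(-1)^{n+1}$ of $\ell_{p,E}(\chi,s)$ equals $(-1)^{n}$ when $\chi(-1)=-1$ (matching the iteration), and annihilates trivially when $\chi(-1)=+1$, since then $\ell_{p,E}(\chi,s)=0$.

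For part~(4), I would apply Corollary~\ref{representation} with $M=p^{v}$ to each $\zeta_{p,E}(\chi,s,x+i/N)$. Multiplicativity of $\chi$ modulo $p^{v}$ yields $\chi(x+i/N+j)=\chi^{-1}(N)\chi(Nx+i+Nj)$, while $(x+i/N+j)/p^{v}=(Nx+i+Nj)/(Np^{v})$. The map $k=Nj+i$ is a bijection from $\{0,\dots,p^{v}-1\}\times\{0,\dots,N-1\}$ onto $\{0,\dots,Np^{v}-1\}$, and since $N$ is odd, $(-1)^{i+j}=(-1)^{i+Nj}=(-1)^{k}$. Reapplying Corollary~\ref{representation} \emph{in reverse} with $M=Np^{v}$ (odd and divisible by $p^{v}$) identifies the regrouped sum with $\chi^{-1}(N)\zeta_{p,E}(\chi,s,Nx)$.

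The main obstacle will be the sign and character bookkeeping in~(3): matching the prescribed coefficient $\chi(-1)(-1)^{n+1}$ of $\ell_{p,E}(\chi,s)$ against the $(-1)^{n}$ that falls out of the direct iteration, which forces one to separate the $\chi$-odd case (where the two coefficients coincide) from the $\chi$-even case (where the $\ell$-term vanishes altogether). A secondary care point is~(4), where the multiplicative identity $\chi(z/N)=\chi^{-1}(N)\chi(z)$ for $z\in\Z$ should be justified by extending $\chi$ via reduction modulo $p^{v}$.
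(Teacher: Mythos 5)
Your proposal is correct and follows essentially the same route as the paper: part (1) by the same telescoping argument (the fermionic functional equation plus continuity of $a\mapsto\chi(x+a)\langle x+a\rangle^{1-s}$), parts (2) and (4) by Corollary~\ref{representation} with exactly the reindexings $k=p^{v}-1-j$ and $a=Nj+i$ used in the paper, and part (3) from (1) and (2). The only cosmetic difference is in (3), where you iterate the functional equation forward from $x=0$ instead of reflecting to $1-n$ and iterating there, and your reconciliation of the coefficient $(-1)^{n}$ of $\ell_{p,E}(\chi,s)$ with the stated $\chi(-1)(-1)^{n+1}$ (equal when $\chi$ is odd, immaterial when $\chi$ is even since then $\ell_{p,E}(\chi,s)=0$) correctly settles the sign bookkeeping.
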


\begin{remark}\label{re4.11}
If $\chi$ is a character modulo $p^{v}$, then we have
\begin{equation}
\begin{aligned}L_{p,E}(\chi,s)&=\zeta_{p,E}(\chi,s,0)\\&=\int_{\mathbb{Z}_{p}}\langle a \rangle^{1-s}\chi(a)d\mu_{-1}(a)\\&=\lim_{N\to\infty}\sum_{a=0}^{p^{N}-1}\langle a \rangle^{1-s}\chi(a)(-1)^{a}\\&= \lim_{N\to\infty}\sum_{\substack{a=0\\ p\nmid a }}^{p^{N}-1}\langle a \rangle^{1-s}\chi(a)(-1)^{a} ,
\end{aligned}
\end{equation}
so the definition of $L_{p,E}(\chi,s)$ in (3) is the same as the
first author's definition of $L_{p,E}(\chi,s)$ using
Kubota-Leopoldt's approach (see \cite[p. 6]{MSK2}).
\end{remark}

\begin{proof}
(1) By definition we have
\begin{equation*}
\begin{aligned}
&\quad\zeta_{p,E}(\chi,s,x+1)+\zeta_{p,E}(\chi,s,x) \\
&=\lim_{N\to\infty}\biggl(-\sum_{j=1}^{p^{N}}\chi(x+j)\langle x+j \rangle^{1-s}(-1)^{j}\\
&\quad+\sum_{j=0}^{p^{N}-1}\chi(x+j)\langle x+j \rangle^{1-s}(-1)^{j}\biggl)\\
&= \lim_{N\to\infty}(\chi(x)\langle x \rangle^{1-s}+\chi(x)\langle x+p^{N}\rangle^{1-s})\\
&=2\chi(x)\langle x \rangle^{1-s},
\end{aligned}
\end{equation*}
since $f(x)=\langle x\rangle^{1-s}$ is continuous for $|s|_{p}\leq R_{p}$ (see \cite[p.\,54]{Wa}).

(2) By Corollary \ref{representation}, Theorem \ref{p-analytic-some-result23} (2)
and setting $i=p^{v}-1-j$ we have
\begin{equation*}
\begin{aligned}&\quad\zeta_{p,E}(\chi,s,1-x) \\
&=\sum_{j=0}^{p^{v}-1}\chi(1-x+j)\zeta_{p,E}
\left(s,\frac{1-x+j}{p^{v}}\right)(-1)^{j}\\&=\sum_{i=0}^{p^{v}-1}\chi(p^{v}-i-x)\zeta_{p,E}\left(s,1-\frac{x+i}{p^{v}}\right)
(-1)^{i}\\&=\chi(-1)\sum_{i=0}^{p^{v}-1} \chi(x+i)\zeta_{p,E}\left(s,1-\frac{x+i}{p^{v}}\right)(-1)^{i}\\
&=\chi(-1)\sum_{i=0}^{p^{v}-1} \chi(x+i)\zeta_{p,E}\left(s,\frac{x+i}{p^{v}}\right)(-1)^{i}\\
&=\chi(-1)\zeta_{p,E}(\chi,s,x).
\end{aligned}
\end{equation*}

(3) From (1), we have
$$\zeta_{p,E}(\chi,s,1)=-\zeta_{p,E}(\chi,s,0)$$
for any character.
If $\chi(-1)=1$, we have  $\zeta_{p,E}(\chi,s,1)=\zeta_{p,E}(\chi,s,0)$ by (2),
thus
$$L_{p,E}(\chi,s)=\zeta_{p,E}(\chi,s,0)=0.$$
For any positive integer $n$, we have
\begin{equation*}
\begin{aligned}
&\quad\zeta_{p,E}(\chi,s,n)\\&=\chi(-1)\zeta_{p,E}(\chi,s,1-n)\\
&=\chi(-1)(2 \langle 1-n \rangle^{1-s}\chi(1-n)-\zeta_{p,E}(\chi,s,2-n))\\
&=\chi(-1)\left(
2\sum_{j=1}^{n-1}\frac{(-1)^{j+1}\chi(j-n)}{\langle j-n\rangle^{s-1}}
+(-1)^{n+1}L_{p,E}(\chi,s)\right)
\end{aligned}
\end{equation*}
by using (1)  and (2).

(4) By corollary \ref{representation} we have
\begin{equation*}
\begin{aligned}&\quad\sum_{i=0}^{N-1}\zeta_{p,E}\left(\chi,s,x+\frac{i}{N}\right)(-1)^{i}\\&=\sum_{i=0}^{N-1} \sum_{j=0}^{p^{v}-1}\chi\left(x+j+\frac{i}{N}\right)\zeta_{p,E}\left(s,\frac{x+j+i/N}{p^{v}}\right)(-1)^{j+i}.
\end{aligned}
\end{equation*}
Setting $a=Nj+i$ and using the fact that $p \nmid N$ and $N$ is an odd positive integer, we have
\begin{equation*}
\begin{aligned}
&\quad\sum_{i=0}^{N-1} \zeta_{p,E}\left(\chi,s,x+\frac{i}{N}\right)(-1)^{i}\\
&=\chi^{-1}(N)\sum_{a=0}^{Np^{v}-1} \chi(Nx+a)\zeta_{p,E}\left(s,\frac{Nx+a}{Np^{v}}\right)(-1)^{a}\\&=\chi^{-1}(N)
\zeta_{p,E}(\chi,s,Nx)
\end{aligned}
\end{equation*}
using Corollary \ref{representation}.
 \end{proof}

Next we prove the Raabe formula  of the  $p$-adic Hurwitz-type Euler zeta functions for $x\in\mathbb{Z}_{p}$.

\begin{theorem}
Let $\chi$ be a character modulo $p^{v}$. For $|s|_{p}<R_{p}$ and $x\in\mathbb{Z}_{p}$
we have $$\int_{\mathbb{Z}_{p}}\zeta_{p,E}(\chi,s,x+a)d\mu_{-1}(a)=2(1-x)\zeta_{p,E}(\chi,s,x)+2\zeta_{p,E}(\chi\omega,s-1,x).$$
\end{theorem}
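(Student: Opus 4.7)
The plan is to reduce the desired identity to a general ``convolution'' identity for $\mu_{-1}$ and then verify that identity on monomials. Let $F(y)=\chi(y)\langle y\rangle^{1-s}$, extended by $F(y)=0$ when $p\mid y$ (which is forced by $\chi(y)=0$). The key observation is that $\omega(y)\langle y\rangle=y$ for $y\in\mathbb{Z}_{p}^{\times}$, so $\chi\omega(y)\langle y\rangle^{2-s}=yF(y)$ on all of $\mathbb{Z}_{p}$ (both sides vanishing when $p\mid y$). Integrating yields
$$\zeta_{p,E}(\chi\omega,s-1,x)=\int_{\mathbb{Z}_{p}}(x+a)F(x+a)\,d\mu_{-1}(a),$$
while by Definition~\ref{de2}, $\zeta_{p,E}(\chi,s,x)=\int_{\mathbb{Z}_{p}}F(x+a)\,d\mu_{-1}(a)$. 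Consequently the right-hand side of the target formula collapses to
$$2\int_{\mathbb{Z}_{p}}\bigl[(1-x)+(x+a)\bigr]F(x+a)\,d\mu_{-1}(a)=2\int_{\mathbb{Z}_{p}}(1+a)F(x+a)\,d\mu_{-1}(a).$$

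By Definition~\ref{de2} applied to $\zeta_{p,E}(\chi,s,x+a)$, the left-hand side of the target formula is the iterated integral $\int\!\!\int F(x+a+b)\,d\mu_{-1}(b)\,d\mu_{-1}(a)$. Setting $G(y)=F(x+y)$, the whole formula therefore reduces to the universal identity
$$\int_{\mathbb{Z}_{p}}\!\int_{\mathbb{Z}_{p}}G(a+b)\,d\mu_{-1}(b)\,d\mu_{-1}(a)=2\int_{\mathbb{Z}_{p}}(1+a)G(a)\,d\mu_{-1}(a).$$
I would verify this first for monomials $G(y)=y^{m}$: by (\ref{int-Em}) the inner integral is $E_{m}(a)$, so the left-hand side becomes $\int E_{m}(a)\,d\mu_{-1}(a)=2(E_{m}(0)+E_{m+1}(0))$, where the last equality is exactly the identity derived via the generating-function manipulation in the proof of Theorem~\ref{p-int-analytic-ft}. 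The right-hand side expands, again by (\ref{int-Em}), as $2E_{m}(0)+2E_{m+1}(0)$. By linearity the identity then holds for every polynomial.

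To pass from polynomial $G$ to $G(y)=\chi(x+y)\langle x+y\rangle^{1-s}$ I would use Mahler's theorem to approximate $G$ uniformly by polynomials in the sup norm, together with the elementary bound $|I_{-1}(h)|_{p}\le\|h\|_{\infty}$, which makes $I_{-1}$ a continuous linear functional on $C(\mathbb{Z}_{p})$. Since $|1+a|_{p}\le 1$ on $\mathbb{Z}_{p}$, the right-hand side depends continuously on $G$ in the sup norm; the inner integral $a\mapsto\int G(a+b)\,d\mu_{-1}(b)$ is sup-norm bounded by $\|G\|_{\infty}$ and continuous in $a$, so the left-hand side is likewise continuous in $G$. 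Our $G$ is locally analytic on $\mathbb{Z}_{p}^{\times}$ and vanishes on $p\mathbb{Z}_{p}$, hence continuous, so the identity extends to it. The main obstacle I anticipate is this density/continuity passage---standard in $p$-adic integration, but requiring careful formalization.
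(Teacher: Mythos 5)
Your proposal is correct, but it proves the theorem by a genuinely different route than the paper. The paper works directly with the finite fermionic Riemann sums: it expands $\int_{\Z}\zeta_{p,E}(\chi,s,x+a)\,d\mu_{-1}(a)$ as $\lim_r S(r)$ via Corollary~\ref{representation} with $M=p^{r}$, regroups the double sum according to $n\equiv i+j\pmod{p^{r}}$ (counting $n+1$ pairs with $i+j=n$ and $p^{r}-(n+1)$ pairs with $i+j=n+p^{r}$), eliminates the shifted terms with the functional equation of Theorem~\ref{p-analytic-some-result}(2), and passes to the limit using $p^{r}\to0$. You instead observe that $\chi\omega(y)\langle y\rangle^{2-s}=yF(y)$ collapses the right-hand side to $2\int(1+a)F(x+a)\,d\mu_{-1}(a)$ and reduces everything to the convolution identity $\int\!\int G(a+b)\,d\mu_{-1}(b)\,d\mu_{-1}(a)=2\int(1+a)G(a)\,d\mu_{-1}(a)$, which you check on monomials via $\int_{\Z}E_m(a)\,d\mu_{-1}(a)=2(E_m(0)+E_{m+1}(0))$ --- exactly the ingredient the paper uses for its Section~3 Raabe formula (Theorem~\ref{p-int-analytic-ft}) --- and then extend by density. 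Your argument is cleaner and more conceptual, and it treats the $x\in C\Z$ and $x\in\Z$ Raabe formulas by one mechanism; the price is that it leans on facts the paper never sets up: that $\mu_{-1}$ is a bounded measure, so that $I_{-1}$ extends to (or is defined on) all of $C(\Z,\C)$ with $|I_{-1}(h)|_p\le\|h\|_\infty$, and Mahler density of polynomials. Both facts are true and standard --- for odd $p$ the defining limit (\ref{-q-e}) converges for every continuous $f$, since $\sum_{j=0}^{p-1}(-1)^j=1$ makes consecutive Riemann sums differ by at most the modulus of continuity of $f$ at scale $p^{-N}$ --- but since the paper defines $I_{-1}$ only on $UD(\Z)$, you should either record this extension explicitly or note that all functions actually occurring ($F(x+\cdot)$, its inner integral, and the polynomial approximants) are locally analytic, hence lie in $UD(\Z)$, so the approximation argument can be run entirely inside the paper's framework. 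The paper's proof, by contrast, needs nothing beyond finite-sum manipulations and results already proved in Sections~3--4.
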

\begin{proof}
From the definition and Corollary \ref{representation}, we have
\begin{equation*}
\begin{aligned}
&\quad\int_{\mathbb{Z}_{p}}\zeta_{p,E}(\chi,s,x+a)d\mu_{-1}(a)\\
&=\lim_{N\to\infty}\sum_{i=0}^{p^N-1}\zeta_{p,E}(\chi,s,x+i)(-1)^{i}\\
&=\lim_{N\to\infty}\sum_{i=0}^{p^N-1}\left(\sum_{j=0}^{p^N-1}
\chi(x+i+j)\zeta_{p,E}\left(\chi,s,\frac{x+i+j}{p^N}\right)(-1)^{j}\right)(-1)^{i}.
\end{aligned}
\end{equation*}
Setting $n\equiv i+j ~\textrm{mod}~ p^N$ in other words the unique  $n\equiv i+j ~\textrm{mod}~ p^N$ such that $0\leq n < p^N$.
We can have only $i+j=n$ or $i+j=n+p^N$ and the number of pairs $(i,j)$ such that $i+j=n$ is equal to $n+1$,
which the number of pairs such that
$i+j=n+p^N$ is equal to $p^N-(n+1)$.
Let $$S(N)=\sum_{i=0}^{p^N-1}\left(\sum_{j=0}^{p^N-1}\chi(x+i+j)\zeta_{p,E}\left(\chi,s,\frac{x+i+j}{p^N}\right)(-1)^{j
}\right)(-1)^{i}.$$
We have
$$\int_{\mathbb{Z}_{p}}\zeta_{p,E}(\chi,s,x+a)d\mu_{-1}(a)=\lim_{N\to\infty}S(N)$$ and
\begin{equation}\label{1}
\begin{aligned}
S(N)&=\sum_{n=0}^{p^N-1}(-1)^{n}\chi(x+n)\biggl((n+1) \zeta_{p,E}\left(s,\frac{x+n}{p^N}\right)
\\&\quad-(p^N-(n+1))\zeta_{p,E}\left(s,\frac{x+n}{p^N}+1\right)\biggl).
\end{aligned}
\end{equation}
By Theorem \ref{p-analytic-some-result23} (1), we have
\begin{equation}\label{2}
\begin{aligned}
\zeta_{p,E}\left(s,\frac{x+n}{p^N}+1\right)& + \zeta_{p,E}\left(s,\frac{x+n}{p^N}\right)=2\langle x+n \rangle^{1-s}.
\end{aligned}
\end{equation}
By (\ref{2}) we have
\begin{equation}\label{3}
\zeta_{p,E}\left(s,\frac{x+n}{p^N}+1\right)=2\langle x+n \rangle^{1-s}-\zeta_{p,E}\left(s,\frac{x+n}{p^N}\right).
\end{equation}
Substitute (\ref{3}) into (\ref{1}) we have
$$
\begin{aligned}
S(N)&=\sum_{n=0}^{p^N-1}(-1)^{n}\chi(x+n)\biggl((n+1)\zeta_{p,E}\left(s,\frac{x+n}{p^N}\right)\\
&\quad-(p^N-(n+1))\left(2\langle x+n \rangle^{1-s}-\zeta_{p,E}\left(s,\frac{x+n}{p^N}\right)\right)\biggl)\\
&=\sum_{n=0}^{p^N-1}(-1)^{n}\chi(x+n)(n+1)\zeta_{p,E}\left(s,\frac{x+n}{p^N}\right)\\
&\quad-\sum_{n=0}^{p^N-1}\chi(x+n)(p^N-(n+1))2 \langle x+n \rangle^{1-s}\\
&\quad+\sum_{n=0}^{p^N-1}(-1)^{n}\chi(x+n)(p^N-(n+1))\zeta_{p,E}\left(s,\frac{x+n}{p^N}\right)\\
&=\sum_{n=0}^{p^N-1}(-1)^{n}\chi(x+n)p^N\zeta_{p,E}\left(s,\frac{x+n}{p^N}\right)\\
&\quad-\sum_{n=0}^{p^N-1}(-1)^{n}\chi(x+n)(p^N-(n+1))2 \langle x+n \rangle^{1-s}.
\end{aligned}
$$
By Corollary \ref{representation} we have
$$
\quad S(N)=p^N\zeta_{p,E}(\chi,s,x)
-\sum_{n=0}^{p^N-1}(-1)^{n}\chi(x+n)(p^N-(n+1))2 \langle x+n \rangle^{1-s}.$$
From above equality we have
$$\begin{aligned}
\lim_{N\to\infty}S(N)=-2\lim_{N\to\infty}\sum_{n=0}^{p^N-1}(-1)^{n}\chi(x+n)(p^N-(n+1)) \langle x+n \rangle^{1-s}.
\end{aligned}$$
Since
$$
\begin{aligned}
&\quad\sum_{n=0}^{p^N-1}(-1)^{n}\chi(x+n)(p^N-(n+1)) \langle x+n \rangle^{1-s}\\
&=\sum_{n=0}^{p^N-1}(-1)^{n}\chi(x+n)(p^N+x-1-(x+n)) \langle x+n \rangle^{1-s}\\
&=p^N\sum_{n=0}^{p^N-1}(-1)^{n}\chi(x+n)\langle x+n \rangle^{1-s}\\
&\quad+\sum_{n=0}^{p^N-1}(-1)^{n}(x-1)\langle x+n \rangle^{1-s}\chi(x+n)\\
&\quad-\sum_{n=0}^{p^N-1}(-1)^{n}\chi(x+n)\omega_{v}(x+n)\langle x+n \rangle \langle x+n \rangle^{1-s}\\
&=p^N\sum_{n=0}^{p^N-1}(-1)^{n}\chi(x+n)\langle x+n \rangle^{1-s}\\
&\quad+\sum_{n=0}^{p^N-1}(-1)^{n}(x-1)\langle x+n \rangle^{1-s}\chi(x+n)\\
&\quad-\sum_{n=0}^{p^N-1}(-1)^{n}\chi\omega(x+n)\langle x+n \rangle^{1-(s-1)}
 \end{aligned}
$$
and
$$\lim_{N\to\infty}p^N\sum_{n=0}^{p^N-1}(-1)^{n}\chi(x+n)\langle x+n \rangle^{1-s}=0,$$
$$\lim_{N\to\infty}\sum_{n=0}^{p^N-1}(-1)^{n}(x-1)\langle x+n \rangle^{1-s}\chi(x+n)=(x-1)\zeta_{p,E}(\chi,s,x),$$
we have
$$
\begin{aligned}
-\lim_{N\to\infty}\sum_{n=0}^{p^N-1}(-1)^{n}\chi\omega(x+n)\langle x+n \rangle^{1-(s-1)}=-\zeta_{p,E}(\chi\omega,s-1,x),
\end{aligned}
$$
and
$$
\begin{aligned}
\int_{\mathbb{Z}_{p}}\zeta_{p,E}(\chi,s,x+a)d\mu_{-1}(a)&=\lim_{N\to\infty}S(N)\\
&=-2((x-1)\zeta_{p,E}(\chi,s,x)-\zeta_ {p,E}(\chi\omega,s-1,x))\\
&=2(1-x)\zeta_{p,E}(\chi,s,x)+2\zeta_{p,E}(\chi\omega,s-1,x).
\end{aligned}
$$
This completes the proof.
\end{proof}
Next we prove the following power series expansion in $x$ of $\zeta_{p,E}(\chi,s,x)$.
\begin{proposition}
Let $\chi$ be a character modulo $p^{v}$ for some $v\geq 1$. For $x\in p^{v}\mathbb{Z}_{p}$ we have the power series expansion
$$\zeta_{p,E}(\chi,s,x)=\sum_{k=0}^{\infty}\binom{1-s}{k}L_{p,E}(\chi\omega^{-k},s+k)x^{k},$$
where $L_{p,E}(\chi,s)=\zeta_{p,E}(\chi,s,0)$ is the $p$-adic Euler
$L$-function.
\end{proposition}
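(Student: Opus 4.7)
The plan is to reduce the statement to a direct computation with the defining fermionic integral. Given $x\in p^{v}\mathbb{Z}_p$, periodicity of $\chi$ modulo $p^v$ yields $\chi(x+a)=\chi(a)$, so
$$\zeta_{p,E}(\chi,s,x)=\int_{\mathbb{Z}_p}\chi(a)\langle x+a\rangle^{1-s}\,d\mu_{-1}(a).$$
Only $a\in\mathbb{Z}_p^{\times}$ contribute (since $\chi(a)=0$ if $p\mid a$). For such $a$, the hypothesis $v_p(x)\geq v\geq 1>0=v_p(a)$ forces $v_p(x+a)=0$ and $x+a\equiv a\pmod p$, whence $\omega_v(x+a)=\omega(a)=\omega_v(a)$.

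Consequently $\langle x+a\rangle=(x+a)/\omega(a)=\langle a\rangle(1+x/a)$, and using $a=\omega(a)\langle a\rangle$ for $p\nmid a$, binomial expansion gives
$$\langle x+a\rangle^{1-s}=\langle a\rangle^{1-s}\sum_{k=0}^{\infty}\binom{1-s}{k}\frac{x^{k}}{a^{k}}=\sum_{k=0}^{\infty}\binom{1-s}{k}\,\omega^{-k}(a)\,\langle a\rangle^{1-(s+k)}\,x^{k}.$$
Substituting this back into the integral and formally interchanging sum and integral yields
$$\zeta_{p,E}(\chi,s,x)=\sum_{k=0}^{\infty}\binom{1-s}{k}\,x^{k}\int_{\mathbb{Z}_p}(\chi\omega^{-k})(a)\,\langle a\rangle^{1-(s+k)}\,d\mu_{-1}(a),$$
and the inner integral is exactly $\ell_{p,E}(\chi\omega^{-k},s+k)$ by the formula recorded in the remark after Theorem~\ref{reflection} (noting that $\chi\omega^{-k}$ is again a character modulo $p^{v}$ since $v\geq 1$). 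This is the claimed expansion.

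The one nontrivial step, and therefore the main obstacle, is justifying the interchange of sum and integral. What is needed is uniform $p$-adic smallness, in $a\in\mathbb{Z}_p^{\times}$, of the tail of the binomial series for $\langle x+a\rangle^{1-s}$. Since $|x/a|_p\leq p^{-v}\leq p^{-1}<p^{-1/(p-1)}$ for odd $p$ and $v\geq 1$, and since $|s|_p<R_p$ places us in the domain of convergence exploited in Theorem~\ref{p-analytic} (whose bounds on $\binom{1-s}{k}$ apply here verbatim), the $k$th term admits a geometric bound independent of $a$. Then the Riemann-type definition of $I_{-1}$ permits termwise passage to the limit, completing the proof.
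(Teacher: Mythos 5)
Your proposal is correct and follows essentially the same route as the paper's own proof: use $x\in p^{v}\mathbb{Z}_p$ to replace $\chi(x+a)$ by $\chi(a)$, restrict to $a\in\mathbb{Z}_p^{\times}$, write $\langle x+a\rangle^{1-s}=\langle a\rangle^{1-s}(1+x/a)^{1-s}$, expand binomially so that $a^{-k}=\omega^{-k}(a)\langle a\rangle^{-k}$ produces the twist $\chi\omega^{-k}$, interchange sum and integral, and identify the resulting integrals with $\ell_{p,E}(\chi\omega^{-k},s+k)$. The only difference is that you supply the uniform-in-$a$ geometric bound justifying the termwise integration, a step the paper performs formally without comment.
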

\begin{proof}
By definition, we have
\begin{equation*}
\begin{aligned}
\zeta_{p,E}(\chi,s,x)&=\int_{\mathbb{Z}_{p}}\chi(x+a)\langle x+a
\rangle^{1-s}
d\mu_{-1}(a)\\&=\int_{\mathbb{Z}_{p}^{\times}}\chi(a)\langle a
\rangle^{1-s}\langle 1+x/a \rangle^{1-s}
d\mu_{-1}(a)\\&=\int_{\mathbb{Z}_{p}^{\times}}\sum_{k=0}^{\infty}\binom{1-s}{k}\chi\omega^{-k}(a)\langle
a
\rangle^{1-s-k}x^{k}d\mu_{-1}(a)\\&=\sum_{k=0}^{\infty}\binom{1-s}{k}x^{k}\int_{\mathbb{Z}_{p}}\chi\omega^{-k}(a)\langle
a
\rangle^{1-s-k}d\mu_{-1}(a)\\&=\sum_{k=0}^{\infty}\binom{1-s}{k}\zeta_{p,E}(\chi\omega^{-k},s+k,0)x^{k}\\
&=\sum_{k=0}^{\infty}\binom{1-s}{k}L_{p,E}(\chi\omega^{-k},s+k)x^{k},
\end{aligned}
\end{equation*}
where $\mathbb{Z}_{p}^{\times}=\mathbb{Z}_{p}\setminus p\mathbb{Z}_{p}$.
This completes the proof.
\end{proof}

\section{$p$-adic Euler $L$-functions}
In section 5, we define the $p$-adic Euler $L$-functions by using
the $p$-adic Hurwitz-type Euler zeta functions. First we recall some
facts on the generalized Euler numbers which will be used in the
subsequent sections to study the properties for the $p$-adic Euler
$L$-functions.

\subsection{Generalized Euler numbers}

We consider the following generating function
\begin{equation}\label{def-E}
F(t,x)=\frac{2e^{xt}}{e^t+1}.
\end{equation}
Expand $F(t,x)$ into a power series of $t:$
\begin{equation}\label{def-E-pow}
F(t,x)=\sum_{n=0}^\infty E_{n}(x)\frac{t^n}{n!}
\end{equation}
(see \cite{CE}). The coefficients $E_n(x),n\geq0,$ are called Euler polynomials.

We generalize the above definition of $E_n(x)$ as follows: let
$\chi$ be a primitive Dirichlet character $\chi$ with an odd
conductor $f=f_\chi,$ and let
\begin{equation}\label{gen-E-numb}
F_\chi(t)=2\sum_{a=1}^{f}\frac{(-1)^a\chi(a)e^{at}}{e^{ft}+1},\quad
|t|<\frac\pi f
\end{equation}
and
\begin{equation}\label{gen-E-poly}
F_\chi(t,x)=F_\chi(t)e^{xt}=2\sum_{a=1}^{f}\frac{(-1)^a\chi(a)e^{(a+x)t}}{e^{ft}+1},\quad
|t|<\frac\pi f.
\end{equation}
(see e.g. \cite[p. 783]{TK}). Expanding theses into power series of
$t,$ let
\begin{equation}\label{gen-E-numb-d}
F_\chi(t)=\sum_{n=0}^{\infty}E_{n,\chi}\frac{t^n}{n!}
\end{equation}
and
\begin{equation}\label{gen-E-poly-d}
F_\chi(t,x)=\sum_{n=0}^{\infty}E_{n,\chi}(x)\frac{t^n}{n!}.
\end{equation}
Then $E_{n,\chi}(x)=\sum_{i=0}^n\binom niE_{i,\chi}x^{n-i}$ for
$n\geq0.$ The coefficients $E_{n,\chi}$ and $E_{n,\chi}(x)$ for
$n\geq0,$ are called generalized Euler numbers and polynomials,
respectively. It is clear from (\ref{gen-E-numb}) that
$F_\chi(-t)=-\chi(-1)F_\chi(t),$ if $\chi\neq\chi^0,$ the trivial
character. Hence we obtain the result as follows:

\begin{proposition}\label{E-0-pro}
If $\chi\neq\chi^0,$ the trivial character, then we have
$$(-1)^{n+1}E_{n,\chi}=\chi(-1)E_{n,\chi},\quad n\geq0.$$
In particular we obtain
$$E_{n,\chi}=0\quad\text{if }\chi\neq\chi^0,\; n\not\equiv\delta_\chi\pmod2,$$
where $\delta_\chi=0$ if $\chi(-1)=-1$ and $\delta_\chi=1$ if
$\chi(-1)=1.$
\end{proposition}

Let $N$ be the least common multiple of $p$ and $f.$ Then by
(\ref{gen-E-numb}), we have
\begin{equation}\label{rabbe}
F_{\chi}(t)=2\sum_{m=0}^\infty(-1)^m\chi(m)e^{mt}=2\sum_{a=1}^{N}(-1)^a\chi(a)\frac{(e^{\frac
aN})^{Nt}}{e^{Nt}+1}.
\end{equation}
Therefore, by (\ref{def-E}), (\ref{def-E-pow}), (\ref{gen-E-numb}),
(\ref{gen-E-numb-d}) and (\ref{rabbe}), we obtain the result as
follows:
\begin{proposition}\label{E-N-pro}
Let $N$ be the least common multiple of $p$ and $f$. Then
$$E_{n,\chi}=N^n\sum_{a=1}^{N}(-1)^a\chi(a)E_{n}\left(\frac{a}{N}\right).$$
\end{proposition}

From (\ref{gen-E-poly}), the generating function $F_\chi(t,x)$ is
given by
\begin{equation}\label{power-sum}
\begin{aligned}
F_\chi(t,x)&=2\sum_{a=0}^{f}(-1)^a\chi(a)\sum_{k=0}^\infty(-1)^ke^{(a+x+fk)t} \\
&=\sum_{n=0}^\infty\left(2\sum_{l=0}^\infty(-1)^l\chi(l)(l+x)^n\right)
\frac{t^n}{n!}.
\end{aligned}
\end{equation}
Comparing coefficients of ${t^n}/{n!}$ on both sides of
(\ref{gen-E-poly-d}) and (\ref{power-sum}) gives
\begin{equation}\label{power-sum-ell}
E_{n,\chi}(x)=2\sum_{l=0}^\infty(-1)^l\chi(l)(l+x)^n
\end{equation}
(see \cite[p.784]{TK}). In particular, if $n\geq0$ we have
\begin{equation}\label{power-sum-ell-N}
E_{n,\chi}(x+N)=2\sum_{l=0}^\infty(-1)^l\chi(l)(l+x+N)^n.
\end{equation}
By (\ref{power-sum-ell}) and (\ref{power-sum-ell-N}), we obtain the
results as follows:

\begin{proposition}\label{E-sum-ex}
Let $N$ be the least common multiple of $f$ and $p.$ Then
$$\sum_{r=0}^{N-1}(-1)^r\chi(r)(x+r)^n=\frac{E_{n,\chi}(x)-(-1)^NE_{n,\chi}(x+N)}{2}.$$
\end{proposition}

We  can also represent the generalized Euler numbers  using the
fermionic $p$-adic integral $I_{-1}(f)$ on $\mathbb Z_p$. Let $\chi$
be a character with conductor a power of $p$. Then by (\ref{int-Em})
and Proposition \ref{E-N-pro}, we see that
\begin{equation}\label{*}\begin{aligned}
\int_{\mathbb Z_p} \chi(x)x^n d\mu_{-1}(x)
&=\lim_{r\rightarrow\infty}\sum_{a=0}^{p^{r+1}-1}(-1)^a\chi(a)a^{n} \\
&=\lim_{r\rightarrow\infty}\sum_{k=0}^{p-1}
\sum_{a=0}^{p^r-1}(-1)^{pa+k}\chi(pa+k)(pa+k)^{n} \\
&=p^n\sum_{k=0}^{p-1}(-1)^{k}\chi(k) \int_{\mathbb Z_p}
\left(\frac{k}{p}+a\right)^{n}d\mu_{-1}(a) \\
&=p^n\sum_{k=0}^{p-1}(-1)^{k}\chi(k)E_n\left(\frac kp\right) \\
&=E_{n,\chi}
\end{aligned}\end{equation}
is equivalent to
\begin{equation}
I_{-1}(\chi(x)x^n)=\int_{\Z}\chi(x)x^n d\mu_{-1}(x)=E_{n,\chi}.
\end{equation}
Similarly, by (\ref{*}) we have
\begin{equation}
I_{-1}(\chi(y)(x+y)^n)=\int_{\Z}\chi(y)(x+y)^n
d\mu_{-1}(y)=E_{n,\chi}(x).
\end{equation}
Therefore we have the results as follows:

\begin{proposition}\label{E-np-pro}
Let $\chi$ be a primitive Dirichlet character with conductor a power
of $p$. Then for $n\geq 0$
\begin{itemize}
\item[\rm(1)] $I_{-1}(\chi(x)x^n)=\int_{\Z}\chi(x)x^n d\mu_{-1}(x)=E_{n,\chi}.$
\item[\rm(2)] $I_{-1}(\chi(y)(x+y)^n)=\int_{\Z}\chi(y)(x+y)^n d\mu_{-1}(y)=E_{n,\chi}(x).$
\end{itemize}
\end{proposition}

\subsection{$p$-adic Euler $L$-functions}
In this subsection, we define the $p$-adic Euler $L$-functions and
compute its value at negative integers.

\begin{definition}\label{11.3.4}
Let $\chi$ be a primitive character of conductor $f$ and $m$ be the
least common multiple of $f$ and $p$. For $s\in\C$ such that
$|s|<R_p=p^{(p-2)/(p-1)},$ we have
$$L_{p,E}(\chi,s)=\langle m\rangle^{1-s}\sum_{a=0}^{f-1}\chi(a)\zeta_{p,E}\left(s,\frac af\right)(-1)^a,$$
where the $p$-adic Hurwitz-type Euler zeta function
$\zeta_{p,E}\left(s, x\right)$ is defined in
Definition \ref{p-E-zeta} and \ref{de2} for $x\in\mathbb{C}_{p}$. We
define $L_{p,E}(\chi,1)=\lim\limits_{s\to1}L_{p,E}(\chi,s),$ when
the limit exist. In particular if $\chi$ is the trivial character
$\chi_0,$ we set $\zeta_{p,E}(s)=L_{p,E}(\chi_0,s)=\zeta_{p,E}(s,0)$
(see Theorem \ref{reflection} (3)).
\end{definition}

 \begin{definition}[{see \cite[p. 302]{Co2}}]\label{11.3.6}
\begin{enumerate}
\item[(1)] Let $m\in\BZ_{>0}.$ We define $\chi_{0,m}$ to be the trivial character modulo 1 when $p\nmid m,$
and to be the trivial character modulo $p$ when $p\mid m.$ In other
words, $\chi_{0,m}(a)=1$ when $p\nmid a$ or when $p\mid a$ but
$p\nmid m$ and $\chi_{0,m}(a)=0$ when $p\mid a$ and $p\mid m.$
\item[(2)] If $I\subset\BZ,$ we set
$$\psum_{a\in I}g(a)=\sum_{\substack{a\in I \\ p\nmid a}}g(a).$$
In particular, if $p\mid m$ we have
$$\psum_{0\leq a<m}g(a)=\sum_{a=0}^{m-1}\chi_{0,m}(a)g(a).$$
\end{enumerate}
\end{definition}

\begin{lemma}[{see \cite[p. 302]{Co2}}]\label{11.3.7}
Let $\chi$ be a nontrivial primitive character of conductor $f$ and
$m$ be a common multiple of $f$ and $p.$ Then
$$\psum_{0\leq a<m}\chi(a)=0.$$
\end{lemma}

\begin{proposition}\label{11.3.8}
Let $\chi$ be a primitive character of an odd conductor $f$ and $m$
be the least common multiple of $f$ and $p$. Let $s\in\C$ be such
that $|s|< R_p.$
\begin{enumerate}
\item[\rm(1)] We have
$$L_{p,E}(\chi,s)=\langle m\rangle^{1-s}\sum_{a=0}^{m-1}\chi_{0,m}(a)\chi(a)\zeta_{p,E}\left(s,\frac am\right)(-1)^a.$$

\item[\rm(2)] If, in addition, $p\mid m,$ we have
$$L_{p,E}(\chi,s)=\langle m\rangle^{1-s}\psum_{0\leq a<m}\chi(a)(-1)^a\left\langle a \right\rangle^{1-s}
\sum_{i=0}^\infty\binom{1-s}i\frac{m^i}{a^i}E_i(0).$$

\item[\rm(3)] If $\chi\neq\chi_0,$ then
$$L_{p,E}(\chi,1)=\psum_{0\leq a<m}\chi(a)(-1)^a.$$
\end{enumerate}
\end{proposition}
\begin{proof}
(1) Writing $a=kf+r,$ we have
\begin{equation}\label{pf-1}
\begin{aligned}
&\quad\langle m\rangle^{1-s}\sum_{a=0}^{m-1}\chi_{0,m}(a)\chi(a)\zeta_{p,E}\left(s,\frac am\right)(-1)^a \\
&=\langle
m\rangle^{1-s}\sum_{r=0}^{f-1}\chi(r)(-1)^r\sum_{k=0}^{\frac
mf-1}\chi_{0,m}(kf+r) \zeta_{p,E}\left(s,\frac{kf+r}m\right)(-1)^k.
\end{aligned}
\end{equation}

Case (I): If $p\nmid m,$ then $\chi_{0,m}$ to be a trivial character
modulo 1 by Definition \ref{11.3.6}. Using Theorem \ref{p-analytic-some-result23} (3), we have
$$\begin{aligned}
\sum_{k=0}^{\frac
mf-1}\chi_{0,m}(kf+r)\zeta_{p,E}\left(s,\frac{kf+r}m\right)(-1)^k
&=\sum_{k=0}^{\frac mf-1}\zeta_{p,E}\left(s,\frac{r}m+\frac{k}{\frac mf}\right)(-1)^k \\
&=\zeta_{p,E}\left(s,\frac rf\right).
\end{aligned}$$

Case (II): If $p\mid m,$ then $\chi_{0,m}$ to be a trivial character
modulo $p$ by Definition \ref{11.3.6}. Thus we have
$$\sum_{k=0}^{\frac
mf-1}\chi_{0,m}(kf+r)\zeta_{p,E}\!\left(s,\frac{kf+r}m\right)(-1)^k
=\!\sum_{\substack{k=0 \\ p\nmid(kf+r)}}^{\frac
mf-1}\zeta_{p,E}\!\left(s,\frac{r}m+\frac{k}{\frac mf}\right)(-1)^k.$$
Using Corollary \ref{C4.5}, we obtain the following:

(a) Suppose $p\nmid f.$ Then $r/f\in\Z$ and
$$p\nmid(kf+r)\quad\text{if and only if}\quad p\nmid(r/f +k).$$
Thus
\begin{equation}\label{pf-2}
\begin{aligned}
\sum_{\substack{k=0 \\ p\nmid(kf+r)}}^{\frac
mf-1}\zeta_{p,E}\left(s,\frac{r}m+\frac{k}{\frac mf}\right)(-1)^k
=\zeta_{p,E}\left(s,\frac rf\right).
\end{aligned}
\end{equation}

(b) If $p\mid f,$ then $\chi_{0,m}(kf+r)=\chi_{0,m}(r).$ We have
$$\begin{aligned}
&\quad\sum_{k=0}^{\frac mf-1}\chi_{0,m}(kf+r)\zeta_{p,E}\left(s,\frac{kf+r}m\right)(-1)^k \\
&=\chi_{0,m}(r)\sum_{k=0}^{\frac mf-1}\zeta_{p,E}\left(s,\frac
rm+\frac k{\frac mf}\right)(-1)^k.
\end{aligned}$$
If $p\mid r,$ then
\begin{equation}\label{pf-3}
\sum_{k=0}^{\frac mf-1}\chi_{0,m}(kf+r)\zeta_{p,E}\left(s,\frac
rm+\frac k{\frac mf}\right)(-1)^k=0.
\end{equation}
If $p\nmid r,$ then $r/f\in C\Z.$ Thus by Theorem \ref{p-analytic-some-result23} (3) and the definition of
$\chi_{0,m}(r)$, we have
\begin{equation}\label{pf-4}
\begin{aligned}
\chi_{0,m}(r)\sum_{k=0}^{\frac mf-1}\zeta_{p,E}\left(s,\frac
rm+\frac k{\frac mf}\right)(-1)^k
&=\chi_{0,m}(r)\zeta_{p,E}\left(s,\frac rf\right) \\
&=\zeta_{p,E}\left(s,\frac rf\right).
\end{aligned}
\end{equation}
Thus if $p\nmid f,$ substitute (\ref{pf-2}) to (\ref{pf-1}), we have
$$\begin{aligned}
&\quad\langle m\rangle^{1-s}\sum_{a=0}^{m-1}\chi_{0,m}(a)\chi(a)\zeta_{p,E}\left(s,\frac am\right)(-1)^a \\
&=\langle
m\rangle^{1-s}\sum_{r=0}^{f-1}\chi(r)(-1)^r\sum_{k=0}^{\frac
mf-1}\chi_{0,m}(kf+r)
\zeta_{p,E}\left(s,\frac rm +\frac k{\frac mf}\right)(-1)^k \\
&=\langle m\rangle^{1-s}\sum_{r=0}^{f-1}\chi(r)(-1)^r\zeta_{p,E}\left(s,\frac rf\right) \\
&=L_{p,E}(\chi,s).
\end{aligned}$$
If $p\mid f,$ substitute (\ref{pf-3}) and (\ref{pf-4}) to
(\ref{pf-1}), we have
$$\begin{aligned}
&\quad\langle m\rangle^{1-s}\sum_{a=0}^{m-1}\chi_{0,m}(a)\chi(a)\zeta_{p,E}\left(s,\frac am\right)(-1)^a \\
&=\langle
m\rangle^{1-s}\sum_{r=0}^{f-1}\chi(r)(-1)^r\sum_{k=0}^{\frac
mf-1}\chi_{0,m}(kf+r)
\zeta_{p,E}\left(s,\frac rm +\frac k{\frac mf}\right)(-1)^k \\
&=\langle m\rangle^{1-s}\sum_{\substack{r=0 \\ p\nmid
r}}^{f-1}\chi(r)(-1)^r\sum_{k=0}^{\frac mf-1}\chi_{0,m}(kf+r)
\zeta_{p,E}\left(s,\frac rm +\frac k{\frac mf}\right)(-1)^k \\
&=\langle m\rangle^{1-s}\sum_{\substack{r=0 \\ p\nmid r}}^{f-1}\chi(r)(-1)^r\zeta_{p,E}\left(s,\frac rf\right) \\
&=L_{p,E}(\chi,s),
\end{aligned}$$
since $p\mid f$ and $p\mid r,$ we have $(f,r)\neq1,$ so $\chi(r)=0,$
thus we get the last equality.

(2) If $p\mid m,$ from (1), we have
$$\begin{aligned}
L_{p,E}(\chi,s)=\langle
m\rangle^{1-s}\sum_{a=0}^{m-1}\chi_{0,m}(a)\chi(a)\zeta_{p,E}\left(s,\frac
am\right)(-1)^a.
\end{aligned}$$
Since $p\mid m,$ by Definition \ref{11.3.6}, we have
\begin{equation}\label{pf-5}
\begin{aligned}
L_{p,E}(\chi,s)&=\langle m\rangle^{1-s}\sum_{a=0}^{m-1}\chi_{0,m}(a)\chi(a)\zeta_{p,E}\left(s,\frac am\right)(-1)^a \\
&=\langle m\rangle^{1-s}\psum_{0\leq
a<m}\chi(a)\zeta_{p,E}\left(s,\frac am\right)(-1)^a.
\end{aligned}
\end{equation}
Also since in the above equality, $p\nmid a$ and $p\mid m,$ we
obtain
$$\frac am\in C\Z.$$
By Theorem \ref{p-analytic}, we have
\begin{equation}\label{pf-6}
\zeta_{p,E}\left(s,\frac am\right) =\left\langle \frac
am\right\rangle^{1-s}
\sum_{i=0}^\infty\binom{1-s}iE_i(0)\frac{m^i}{a^i},
\end{equation}
where $E_i$ are Euler numbers. Substitute (\ref{pf-6}) to
(\ref{pf-5}), we have
$$L_{p,E}(\chi,s)=\psum_{0\leq a<m}\chi(a)(-1)^a \langle a \rangle^{1-s}
\sum_{i=0}^\infty\binom{1-s}i\frac{m^i}{a^i}E_i(0).$$

(3) Since $p\mid m,$ by (2), we have
$$L_{p,E}(\chi,1)=\psum_{0\leq a<m}\chi(a)(-1)^a$$
and the result is established.
\end{proof}

Next we compute the values of $p$-adic Euler $L$-functions at
negative integers.
\begin{proposition}\label{11.3.9}
Keep the above assumptions.
\begin{enumerate}
\item[\rm(1)]
The function $L_{p,E}(\chi,s)$ is a $p$-adic analytic function for
$|s|<R_p.$
\item[\rm(2)]
For $k\in\BZ_{\geq1},$ we have
$$
L_{p,E}(\chi,1-k)=(1-p^k\chi_k(p))E_{k,\chi_k},$$ where
$\chi_k=\chi\omega^{-k}$ and $\chi\not=\chi_{0}$, the trivial
character.
\item[\rm(3)]
If $\chi$ is an even character the function $L_{p,E}(\chi,s)$ is
identically equal to zero
\end{enumerate}
\end{proposition}
\begin{remark}\label{similar}
 By comparing Proposition \ref{11.3.9} (2) with equalities (3.2) and (3.3) in \cite[p. 6]{MSK2}, from Lemma 1 in \cite[p. 19]{Iw} we conclude that for Dirichlet characters with odd conductor the definition of $p$-adic Euler $L$-functions in this paper
is equivalent to the first author's previous definition in
\cite{MSK2} following Kubata-Leopoldt's approach.
\end{remark}
\begin{proof}
(1) By definition of $L_{p,E}(\chi,s),$ Theorem \ref{p-analytic} and
Proposition \ref{analytic2}, the result follows.

(2) Let $m$ be the least common multiple of $p$ and $f$. Then by
Proposition \ref{11.3.8} (1), we have
$$\begin{aligned}
L_{p,E}(\chi,1-k)=\langle
m\rangle^{k}\sum_{a=0}^{m-1}\chi_{0,m}(a)\chi(a)\zeta_{p,E}\left(1-k,\frac
am\right)(-1)^a.
\end{aligned}$$
Since $p\mid m,$ by Definition \ref{11.3.6} (2), we have
\begin{equation}\label{pf-7}
\begin{aligned}
L_{p,E}(\chi,1-k)=\langle m\rangle^{k}\psum_{0\leq
a<m}\chi(a)\zeta_{p,E}\left(1-k,\frac am\right)(-1)^a.
\end{aligned}
\end{equation}
Also since $p\nmid a$ and $p\mid m,$ we have $a/m\in C\Z.$ Using
Theorem \ref{p-E-zeta-val} (2) we have
\begin{equation}\label{pf-8}
\begin{aligned}
\zeta_{p,E}\left(1-k,\frac am\right)=\frac{1}{\omega_v^k\left(\frac
am\right)}E_k\left(\frac am\right).
\end{aligned}
\end{equation}
where $E_k(x)$ are the Euler polynomials. Substitute (\ref{pf-8}) to
(\ref{pf-7}), we have
$$\begin{aligned}
L_{p,E}(\chi,1-k)&=\langle m\rangle^{k}\psum_{0\leq
a<m}\chi(a)(-1)^a
\frac{1}{\omega_v^k\left(\frac am\right)}E_k\left(\frac am\right)\\
&=\langle m\rangle^{k}\omega_v^k(m)\psum_{0\leq
a<m}(-1)^a\chi\omega_v^{-k}(a) E_k\left(\frac am\right).
\end{aligned}$$
Since $p\nmid a,$ we have
$$\omega_v(a)=p^{v(a)}\omega\left(\frac{a}{p^{v(a)}}\right)=\omega(a).$$
Thus if $\chi\not=\chi_{0}$, by Proposition \ref{E-N-pro}, we have
$$\begin{aligned}
L_{p,E}(\chi,1-k) &=\langle m\rangle^{k}\omega_v^k(m)\psum_{0\leq
a<m}(-1)^a\chi\omega^{-k}(a)
E_k\left(\frac am\right) \\
&=\langle m\rangle^{k}\omega_v^k(m)\biggl(\sum_{a=0}^{m-1}(-1)^a\chi\omega^{-k}(a)E_k\left(\frac am\right) \\
&\quad-\sum_{\substack{a=0 \\ p\mid a}}^{m-1}(-1)^a\chi\omega_v^{-k}(a)E_k\left(\frac am\right)\biggl) \\
&=\biggl(m^k\sum_{a=0}^{m-1}(-1)^a\chi\omega^{-k}(a)E_k\left(\frac am\right) \\
&\quad-p^k\chi\omega^{-k}(p)
\left(\frac mp\right)^k\sum_{a=0}^{\frac mp-1}(-1)^a\chi\omega^{-k}(a)E_k\biggl(\frac {a}{\frac mp}\biggl)\biggl) \\
&=(1-p^k\chi_k(p))E_{k,\chi_k},
\end{aligned}$$
where $\chi_k=\chi\omega^{-k}.$

(3) From Definition \ref{11.3.4}, we have
$$L_{p,E}(\chi,s)=\langle m\rangle^{1-s}\sum_{a=0}^{f-1}\chi(a)\zeta_{p,E}\left(s,\frac af\right)(-1)^a.$$
Let $b=f-1-a,$ and let $\chi$ be an even character. Then by
Theorem \ref{p-analytic-some-result23} (2) and Theorem \ref{reflection}, we
have
$$\begin{aligned}
L_{p,E}(\chi,s)&=\langle m\rangle^{1-s}\sum_{b=0}^{f-1}\chi(f-1-b)\zeta_{p,E}\left(s,\frac{f-1-b}f\right)(-1)^{f-1-b} \\
&=\langle m\rangle^{1-s}\sum_{b=0}^{f-1}\chi(1+b)\zeta_{p,E}\left(s,\frac{1+b}f\right)(-1)^{b} \\
&=\langle
m\rangle^{1-s}\sum_{b'=1}^{f}\chi(b')\zeta_{p,E}\left(s,\frac{b'}f\right)(-1)^{b'}(-1),
\end{aligned}$$
so $L_{p,E}(\chi,s)=-L_{p,E}(\chi,s).$ Therefore $L_{p,E}(\chi,s)=0$
if $\chi$ is an even character.
\end{proof}

\subsection{$p$-adic Euler L-function at positive integers} In this
subsection we study the behavior of $p$-adic Euler $L$-functions at
positive integers following Cohen's approach in Section 11.3.3
of \cite{Co2}. We show that most of the results in Section 11.3.3 of
Cohen's book are also established if we replace the generalized
Bernoulli numbers with the generalized Euler numbers.
\begin{proposition} \label{11.3.10}
Let $f$ be an odd integer and $\chi$ be a primitive character modulo
$f$ and $m$ be the least common multiple of $f$ and $p$.
\begin{itemize}
\item[\rm(1)] For $k\in\mathbb{Z}\backslash\{0\}$, we have
$$L_{p,E}(\chi,k+1)=\lim_{N\to\infty}\psum_{0\leq n <  mp^{N}}\chi\omega^{k}(n)\frac{(-1)^{n}}{n^{k}}.$$
\item[\rm(2)] We have
$$L_{p,E}(\chi,1)= E_{0,\chi}, $$
where $E_{0,\chi}$ defined in (\ref{gen-E-numb-d}).
\end{itemize}
\end{proposition}
\begin{proof}
(1) By Theorem \ref{p-E-zeta-val} (1), for $p\nmid a$, we have
\begin{equation}\label{pf-9}
\begin{aligned}
\zeta_{p,E}\left(k+1,\frac{a}{m}\right)&=\omega_{v}^{k}\left(\frac{a}{m}\right)\int_{\mathbb{Z}_{p}}
\frac{1}{\left(\frac{a}{m}+j\right)^{k}}d\mu_{-1}(j)\\
&=\omega_{v}^{k}\left(\frac{a}{m}\right)\lim_{N\to\infty}\sum_{j=0}^{p^{N}-1}\frac{(-1)^{j}}{\left(\frac{a}{m}+j\right)^{k}}\\
&=\omega_{v}^{k}\left(\frac{a}{m}\right)m^{k}\lim_{N\to\infty}\sum_{j=0}^{p^{N}-1}\frac{(-1)^{j}}{(a+mj)^{k}}\\
&=\omega_{v}^{k}(a)\frac{m^{k}}{\omega_{v}^{k}(m)}\lim_{N\to\infty}\sum_{j=0}^{p^{N}-1}\frac{(-1)^{j}}{(a+mj)^{k}}.
\end{aligned}
\end{equation}
So that, by Definition \ref{11.3.6} (2) and Proposition \ref{11.3.8}
(1), we have
\begin{equation} \label{pf-10}
\begin{aligned}
L_{p,E}(\chi,k+1)&=\langle m \rangle^{-k}\sum_{a=0}^{m-1}\chi_{0,m}(a)\chi(a)\zeta_{p,E}\left(k+1,\frac{a}{m}\right)(-1)^{a}\\
&=\langle m \rangle^{-k}\psum_{0\leq a<
m}\chi(a)\zeta_{p,E}\left(k+1,\frac{a}{m}\right)(-1)^{a}.
\end{aligned}
\end{equation}
Thus if we set
$$n=mj+a, ~\textrm{where}~ 0\leq j\leq p^{N}-1, ~0\leq a\leq m-1,~ p\nmid a.$$
Then $$0\leq n\leq mp^{N}-1,~\textrm{and}~ p\nmid m.$$
Substitute (\ref{pf-9}) to (\ref{pf-10}), since $f$ is an odd
integer and $m$ is the least common multiple of $f$ and $p$, we have
\begin{equation}\label{pf-11}
\begin{aligned}
L_{p,E}(\chi,k+1)&=\psum_{0\leq a < m}\chi(a)\omega_{v}^{k}(a)(-1)^{a}\lim_{N\to\infty}\sum_{j=0}^{p^{N}-1}\frac{(-1)^{j}}{(a+mj)^{k}}\\
&=\psum_{0\leq a < m}\chi(a)\omega_{v}^{k}(a)\lim_{N\to\infty}\sum_{j=0}^{p^{N}-1}\frac{(-1)^{j+a}}{(a+mj)^{k}}\\
&=\psum_{0\leq a < m}\chi\omega^{k}(a)\lim_{N\to\infty}\sum_{j=0}^{p^{N}-1}\frac{(-1)^{ma+j}}{(ma+j)^{k}}\\
&=\lim_{N\to\infty}\psum_{0\leq n <
mp^{N}-1}\chi\omega^{k}(n)\frac{(-1)^{n}}{n^{k}}.
\end{aligned}
\end{equation}

(2) By Definition of $L_{p,E}(\chi,s)$, Theorem \ref{zeta-1} and
Corollary \ref{zeta-2}, we have
$$\begin{aligned}
L_{p,E}(\chi,k+1)&=\sum_{a=0}^{f-1}\chi(a)\zeta_{p,E}\left(1,\frac{a}{f}\right)(-1)^{a}\\
&=\sum_{a=0}^{f-1}\chi(a)(-1)^{a}\\
&=E_{0,\chi}.
\end{aligned}
$$
This completes the proof.
\end{proof}

\begin{corollary}\label{11.3.11}
Let $k\in\mathbb{Z}\backslash\{0\}$. If $\chi$ is a primitive
character modulo a power of $p$, then we have
$$L_{p,E}(\chi,k+1)=\int_{\mathbb{Z}_{p}^{\times}}\frac{\chi\omega^{k}(x)}{x^{k}}d\mu_{-1}(x).$$
In particular,
$$L_{p,E}(\omega^{-k},k+1)=\int_{\mathbb{Z}_{p}^{\times}}\frac{1}{x^{k}}d\mu_{-1}(x).$$
\end{corollary}
\begin{proof}
If $f$ is a power of $p$, that is $f=p^{v}$, then we have
$$\begin{aligned}
L_{p,E}(\chi,s)&=\sum_{a=0}^{p^{v}-1}\chi(a)\zeta_{p,E}\left(s,\frac{a}{p^{v}}\right)(-1)^{a}\\
&=\zeta_{p,E}(\chi,s,0)\\
&=\ell_{p,E}(\chi,s)
\end{aligned}
$$
by Corollary \ref{representation} and Remark \ref{re4.11}. Thus
$$\begin{aligned}
L_{p,E}(\chi,s)&=\ell_{p,E}(x,k+1)\\
&=\zeta(\chi,s,0)\\
&=\int_{\mathbb{Z}_{p}}\chi(x)\langle x \rangle^{-k}d\mu_{-1}(x)\\
&=\int_{\mathbb{Z}_{p}^{\times}}\chi(x)\langle x \rangle^{-k}d\mu_{-1}(x)\\
&=\int_{\mathbb{Z}_{p}^{\times}}\chi\omega_{k}(x)x^{-k}d\mu_{-1}(x)\\
&=\int_{\mathbb{Z}_{p}^{\times}}\frac{\chi\omega_{k}(x)}{x^{k}}d\mu_{-1}(x).
\end{aligned}
$$
This completes the proof.
\end{proof}

\begin{proposition}\label{11.3.12}
Let $\chi$ be a primitive character modulo $f$ and $\Phi$ be the
Euler-phi function. Then for all $k\in\mathbb{Z}$, we have
$$L_{p,E}(\chi,k+1)=\lim_{r\to\infty}E_{\Phi(p^{r})-k,\chi\omega^{k}}.$$
In particular, $$\lim_{r\to\infty}E_{\Phi(p^{r})-k}(0)=
L_{p,E}(\omega^{-k},k+1).$$
\end{proposition}
\begin{proof}Denote $\chi_{k}=\chi\omega^{-k}$. Since $L_{p,E}(\chi,s)$ is a continuous function of $s$, for all $k\in\mathbb{Z}$, we have
$$\begin{aligned}
L_{p,E}(\chi,k+1)&=\lim_{r\to\infty}L_{p,E}(\chi,k+1-\Phi(p^{r}))\\
&=\lim_{r\to\infty}L_{p,E}(\chi,1-(\Phi(p^{r})-k)\\
&=\lim_{r\to\infty}(1-
p^{\Phi(p^{r})-k}\chi_{\Phi(p^{r})-k}(p))E_{\Phi(p^{r})-k,\chi\omega^{k-\Phi(p^{r})}}
\end{aligned}
$$
using Proposition \ref{11.3.9}. Since
$$\omega^{\Phi(p^{r})}=\omega^{p^{r-1}(p-1)}=1\quad\text{and}
\quad\chi_{\Phi(p^{r})-k}=\chi\omega^{k-\Phi(p^{r})}=\chi\omega^{k},$$
thus
$$L_{p,E}(\chi,k+1)=\lim_{r\to\infty}E_{\Phi(p^{r})-k,\chi\omega^{k}}.$$
This completes the proof of the Theorem.
\end{proof}

\begin{definition}\label{11.3.13}
For $k\in\mathbb{Z}$, we define the $p$-adic $\chi$-Euler numbers by
$$
E_{k,p,\chi}=\lim_{r\to\infty}E_{\Phi(p^{r})+k,\chi}=L_{p,E}(\chi\omega^{k},1-k).
$$
\end{definition}

\begin{proposition}\label{11.3.14}
Assume that the conductor of $\chi$ is a power of $p$. Then for
$k\in\mathbb{Z}$, we have
$$E_{k,p,\chi}=\lim_{r\to\infty}\psum_{0\leq n < p^{r}}\chi(n)n^{k}(-1)^{n}
=\int_{\mathbb{Z}_{p}}\chi(x)x^{k}d\mu_{-1}(x).$$
\end{proposition}
\begin{proof} We have
$$\begin{aligned}
E_{k,p,\chi}&=L_{p,E}(\chi\omega^{k},1-k)\\
&=\int_{\mathbb{Z}_{p}^{\times}}\chi(x)x^{k}d\mu_{-1}(x)\\&=\lim_{r\to\infty}\psum_{0\leq
n < p^{r}}\chi(n)n^{k}(-1)^{n}.
\end{aligned}
$$
using Corollary \ref{11.3.11}. Thus
$$
E_{k,p,\chi}=\lim_{r\to\infty}\psum_{0\leq n <
p^{r}}\chi(n)n^{k}(-1)^{n}
=\int_{\mathbb{Z}_{p}}\chi(x)x^{k}d\mu_{-1}(x).
$$
This completes the proof.
\end{proof}

\begin{proposition}\label{11.3.15}
\begin{itemize}
\item[{\rm (1)}] If $\chi(-1)=(-1)^{k}$, then we have
$$E_{k,p,\chi}=0.$$
\item[{\rm (2)}] If $k\geq 1$, then we have
$$E_{k,p,\chi}=(1-p^{k}\chi(p))E_{k,\chi}.$$
\item[{\rm (3)}] Let $m$ be the least common multiple of $f$ and $p$, and set
$$H_{n}(x)=\psum_{0\leq a < m}\frac{\chi(a)}{a^{n}}(-1)^{a}.$$
If $k\geq 1$ and $\chi(-1)=(-1)^{k-1}$, we have
$$E_{-k,p,\chi}=\sum_{i=0}^{m}(-1)^{i}\binom{k+i-1}{k-1} m^{i}E_{i}(0)H_{k+i}(x).$$
\item[{\rm (4)}] For all $k$, we have $v_{p}(E_{k,p,\chi})\geq 0$.
\end{itemize}
\end{proposition}
\begin{proof}
(1) If $\chi(-1)=1$ and $k\equiv 0 ~(\textrm{mod}~ 2)$, then $\Phi(p^{r})+ k\equiv 0 ~(\textrm{mod}~ 2), $ we have $E_{\Phi(p^{r}) + k,\chi}=0$ by Proposition \ref{E-0-pro}, thus $$E_{k,p,\chi}=\lim_{r\to\infty}E_{\Phi(p^{r})+ k,\chi}=0.$$
If $\chi(-1)=-1$ and $k\equiv 1~ (\textrm{mod}~ 2)$, then
$\Phi(p^{r})+k\equiv 1 ~(\textrm{mod}~ 2), $ we have
$E_{\Phi(p^{r})+k,\chi}=0$ by Proposition \ref{E-0-pro}, thus
$$E_{k,p,\chi}=\lim_{r\to\infty}E_{\Phi(p^{r})+k,\chi}=0.$$

(2) By Proposition \ref{11.3.9} (2) and Definition \ref{11.3.13}, we have
$$
E_{k,p,\chi}=L_{p,E}(\chi\omega^{k},1-k)=(1-p^{k}\chi(p))E_{k,\chi}.
$$

(3) By Proposition  \ref{11.3.8}, we have
$$\begin{aligned}
L_{p,E}(\chi\omega^{-k},k+1)&=\langle m \rangle^{-k}\psum_{0\leq a < m}\chi\omega^{-k}(a)(-1)^{a}\left\langle\frac{a}{m}\right\rangle^{-k}\sum_{i=0}^{\infty}\binom{-k}i\frac{m^{i}}{a^{i}}E_{i}(0)\\
&=\psum_{0\leq a < M}\chi_{k}(a)(-1)^{a}\langle a \rangle^{-k}\sum_{i=0}^{\infty}\binom{-k}i\frac{m^{i}}{a^{i}}E_{i}(0)\\
&=\sum_{i=0}^{\infty}\binom{-k}i m^{i}E_{i}(0)\psum_{0\leq a < m}\chi_{k}(a)(-1)^{a}\frac{\langle a\rangle^{-k}}{a^{i}}\\
&=\sum_{i=0}^{\infty}\binom{-k}i m^{i}E_{i}(0)\psum_{0\leq a < m}\chi\omega^{-k}(a)(-1)^{a}\frac{\omega^{k}(a)}{a^{i+k}}\\
&=\sum_{i=0}^{\infty}\binom{-k}i m^{i}E_{i}(0)\psum_{0\leq a <
m}\chi(a)(-1)^{a}\frac{1}{a^{i+k}}.
\end{aligned}$$
 By Definition \ref{11.3.13} (1), we have
$$\begin{aligned}
E_{-k,p,\chi}&=L_{p,E}(\chi\omega^{-k},k+1)\\
&=\sum_{i=0}^{\infty}\binom{-k}i m^{i}E_{i}(0)H_{k+i}(x)\\
&=\sum_{i=0}^{\infty}(-1)^{i}\binom{k+i-1}{k-1}m^{i}E_{i}(0)H_{k+i}(x).
\end{aligned}$$

(4) We have
$$\begin{aligned}
v_{p}(E_{k,\chi})&=v_{p}\left(N^{k}\sum_{a=1}^{N}(-1)^{a}\chi(a)E_{k}(0)\left(\frac{a}{N}\right)\right)\\
&\geq\min\limits_{1\leq a\leq N}v_{p}\left(N^{k}(-1)^{a}\chi(a)E_{k}(0)\left(\frac{a}{N}\right)\right)\\
&=\min\limits_{1\leq a\leq N}v_{p}\left(N^{k}(-1)^{a}\chi(a)\sum_{j=0}^{k}\binom{k}j\left(\frac{a}{N}\right)^{j}E_{k-j}(0)\right)\\
&\geq\min\limits_{\substack{1\leq a \leq N \\0\leq j\leq k}}v_{p}\left(N^{k}(-1)^{a}\chi(a)\binom{k}{j}\left(\frac{a}{N}\right)^{j}E_{k-j}(0)\right)\\
&\geq 0
\end{aligned}$$
proving (4).
\end{proof}

\section*{Acknowledgment}

This work was supported by the National Research Foundation of
Korea(NRF) grant funded by the Korea government(MEST) (2011-0001184
). The authors are enormously grateful to the anonymous referee
whose comments and suggestions lead to a large improvement of the
paper.

\bibliography{central}

\end{document}